\DeclareFontFamily{OT1}{pzc}{}
\DeclareFontShape{OT1}{pzc}{m}{it}{<-> s * [1.10] pzcmi7t}{}
\DeclareMathAlphabet{\mathpzc}{OT1}{pzc}{m}{it}
\newtheorem*{rep@theorem}{\rep@title}
\newcommand{\newreptheorem}[2]{%
	\newenvironment{rep#1}[1]{%
		\def\rep@title{#2~\ref{##1}}%
		\begin{rep@theorem}}%
		{\end{rep@theorem}}}
\theoremstyle{plain}
\newtheorem*{thm*}{Theorem}
\newtheorem{thm}{Theorem}[section]
\newtheorem{mthm}{\bf Theorem}
\newtheorem{mcor}[mthm]{\bf Corollary}
\newtheorem{cor}[thm]{Corollary}
\newtheorem{lem}[thm]{Lemma}
\newtheorem*{lem*}{Lemma}
\newtheorem{prp}[thm]{Proposition}
\newtheorem{conj}[thm]{Conjecture}
\newtheorem{prb}[thm]{Open Problem}
\newtheorem{obs}[thm]{Observation}
\theoremstyle{definition}
\newtheorem{dfn}[thm]{Definition}
\newtheorem{rem}[thm]{Remark}
\newcommand{\longtwoheadrightarrow}{\longrightarrow\hspace{-1.2em}\rightarrow\hspace{.2em}}
\newcommand{\ann}{\mathrm{ann}}
 \newcommand{\R}{\mathbb{R}}
\newcommand{\tet}{\vartheta}
\newcommand{\st}{\mr{st}}
\newcommand{\sbseq}{\subseteq}
\newcommand{\spseq}{\supseteq}
\newcommand{\n}{\frk{n}}
\newcommand{\vanish}[1]{}
\newcommand{\De}{\varDelta} 
\newcommand{\Ga}{\varGamma} 
\newcommand{\Sg}{\varSigma}
\def\V{{\bf V}}
\def\im{\mathrm{im}}
\def\ker{\mathrm{ker}}
\def\sbs\subset
\def\sbseq{\subseteq}
\def\langle{\left<}
\def\rangle{\right>}
\def\({\left(}
\def\){\right)}
\def\no={\,{\,|\!\!\!\!\!=\,\,}}
\def\no={\,{\,|\!\!\!\!\!=\,\,}}
\def\sbseq{\subseteq}
\def\sbseq{\subseteq}
\def\sbs\subset
\def\spseq{\supseteq}
\newcommand{\xqedhere}[2]{%
	\rlap{\hbox to#1{\hfil\llap{\ensuremath{#2}}}}}
\newcommand\Defn[1]{\textbf{#1}}
\newcommand{\cm}[1]{}
\newcommand\mbf[1]{\mathbf{#1}}
\newcommand\mr[1]{\mathrm{#1}}
\newcommand{\fld}{\mathbbm{k}}
\renewcommand\deg{\mr{deg}}
\newcommand{\bigslant}[2]{{\raisebox{.3em}{$#1$} \Big/ \raisebox{-.3em}{$#2$}}}
\newcommand\x{\mathbf{x}}
\DeclareMathOperator{\lk}{lk}
\DeclareMathOperator{\susp}{susp}
\newcommand{\KK}{\mathcal{K}}
\title[Anisotropy, biased pairings and Lefschetz for cycles]{Anisotropy, biased pairings, and the Lefschetz property for pseudomanifolds and cycles}
\author[Karim Adiprasito]{Karim Alexander Adiprasito}
\address{{Karim Adiprasito}, Einstein Institute of Mathematics, Hebrew University of Jerusalem, 91904 Jerusalem, Israel \emph{and} Department of Mathematical Sciences, University of Copenhagen, 2100 Copenhagen, Denmark}
\email{adiprasito@math.huji.ac.il}
\author[Stavros~A.~Papadakis]{Stavros~Argyrios~Papadakis}
\address{Stavros~Argyrios~Papadakis, Department of Mathematics, 
University of Ioannina,
Ioannina, 45110, 
Greece}
\email{spapadak@uoi.gr}
\author[Vasiliki Petrotou]{Vasiliki Petrotou}
\address{Vasiliki Petrotou,\ Department of Mathematics, 
University of Ioannina,
Ioannina, 45110, 
Greece}
\email{v.petrotou@uoi.gr}
\date{\today}
\keywords{hard Lefschetz theorem, pseudomanifolds, simplicial cycles, face rings}
\subjclass[2010]{Primary 05E45, 13F55; Secondary  32S50, 14M25, 05E40, 52B70, 57Q15}
\begin{document}
	
	\begin{abstract}
We prove the hard Lefschetz property for pseudomanifolds and cycles in any characteristic with respect to an appropriate Artinian reduction. The proof is a combination of Adiprasito's biased pairing theory and a generalization of a formula of Papadakis-Petrotou to arbitrary characteristic. In particular, we prove the Lefschetz theorem for doubly Cohen Macaulay complexes, solving a generalization of the g-conjecture due to Stanley.
We also provide a simplified presentation of the characteristic 2 case, and generalize it to pseudomanifolds and cycles.
\end{abstract}
	
	\maketitle
	
	\newcommand{\AR}{\mathcal{A}}
	\newcommand{\BR}{\mathcal{B}}
	\newcommand{\CR}{\mathcal{C}}
	\newcommand{\Mu}{M}
	\newcommand{\Soc}{\mathcal{S}\hspace{-1mm}\mathcal{o}\hspace{-1mm}\mathcal{c}}
	\newcommand{\Socl}{{\Soc^\circ}}
	\renewcommand{\n}{\mbf{n}}

\section{Introduction}
	
In \cite{AHL}, Adiprasito introduced the Hall-Laman	relations and biased pairing property to prove the hard Lefschetz theorem for homology spheres and homology manifolds with respect to generic Artinian reductions and generic degree one elements acting as Lefschetz operators. They replace, intuitively speaking, the Hodge-Riemann relations in the K\"ahler setting, which we know cannot apply to general spheres outside the realm of polytope boundaries.

The proof relies on the fact that the biased pairing properties, the property that the Poincar\'e pairing does not degenerate at \emph{certain ideals}, specifically monomial ideals. This property allows for replacing the problem of proving the Lefschetz property with an equivalent one, which can be chosen much easier, as well as the idea of using the linear system as a variable. However, this replacement process is rather tedious geometrically, and is somewhat tricky to generalize to singular manifolds, and gets more tedious for objects beyond mild singularities. So the proof is geometrically somewhat challenging.

Recently, Papadakis and Petrotou \cite{PP} have built on the same two ideas, and studied anisotropy of Stanley-Reisner rings, a stronger (in the sense of more restrictive) notion than the biased pairing property, demanding that the Poincar\'e pairing does not degenerate at \emph{any} ideal. Clearly, for this, the field has to be rather special, and cannot be algebraically closed, for instance. Nevertheless, it implies a partial new proof of the Lefschetz property for spheres, though only in characteristic 2, by observing that in certain transcendental extensions, anisotropy is relatively easy  to prove by linear algebra arguments using a formula they had found.

Still, it is quite remarkable that both proofs of the generic Lefschetz property have their key ideas in common:

\begin{compactitem}[$\circ$]
\item Both proofs make use of the self-pairing in face rings. 
\item Both make use of infinitesimal deformations of the linear system.
\end{compactitem}

So it is natural to try to bring both together to compensate for each other's weaknesses. Our first main result is as follows.

\begin{mthm}\label{mthm:gl}
Consider $\fld$ any infinite field, $\mu$ a simplicial cycle of dimension $d-1$ over $\fld$, and the associated graded commutative face ring~$\fld[|\mu|]$ over its support $|\mu|$. 

Then there exists an Artinian reduction $\AR(\mu)$, and an element $\ell$ in $\AR^1(\mu)$, such that 
for every $k\le\nicefrac{d}{2}$, we have the \emph{hard Lefschetz property:} We have an isomorphism 
			\[\BR^k(\mu)\ \xrightarrow{\ \cdot \ell^{d-2k} \ }\ \BR^{d-k}(\mu). \]
\end{mthm}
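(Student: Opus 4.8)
The plan is to deduce hard Lefschetz from an \emph{anisotropy} statement for a specially chosen Artinian reduction, to prove that anisotropy by extending the Papadakis--Petrotou pairing formula \cite{PP} to all characteristics, and to use the biased pairing theory of \cite{AHL} to package the result and to accommodate the fact that $|\mu|$ is merely a cycle. First I would adjoin to $\fld$ a transcendental $x_v$ for each vertex $v$ of $|\mu|$ (in odd or zero characteristic, possibly a formal square root $\sqrt{x_v}$), forming an extension $\fld'$, and let $\AR(\mu)$ be the Artinian reduction of the graded-commutative face ring $\fld'[|\mu|]$ modulo the linear system of parameters whose coefficient matrix is built from the $x_v$; let $\BR(\mu)$ be the Poincar\'e-duality quotient of $\AR(\mu)$ along the fundamental cycle of $\mu$, so that $\BR(\mu)$ is graded Artinian Gorenstein of socle degree $d$ with $\deg\colon\BR^d(\mu)\xrightarrow{\sim}\fld'$ and $\dim_{\fld'}\BR^k(\mu)=\dim_{\fld'}\BR^{d-k}(\mu)$. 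For generic $\ell\in\AR^1(\mu)$ over $\fld'$, the map $\cdot\,\ell^{d-2k}\colon\BR^k(\mu)\to\BR^{d-k}(\mu)$ is an isomorphism once it is injective, and it is injective whenever the quadratic form $a\mapsto\deg(a^2\ell^{d-2k})$ on $\BR^k(\mu)$ is anisotropic (if $a\ell^{d-2k}=0$ then $\deg(a^2\ell^{d-2k})=0$). Finally, this Lefschetz isomorphism is the non-vanishing of one polynomial in the entries of the linear system; that polynomial is nonzero at the algebraically independent transcendentals, hence nonzero identically, hence nonzero at some $\fld$-point since $\fld$ is infinite, so it is enough to argue over $\fld'$.

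The core step is thus the anisotropy of $(\BR(\mu),\ell)$, via a characteristic-free Papadakis--Petrotou formula. In characteristic $2$ this formula computes the top-degree product of monomials in $\AR(\mu)$ as an explicit differential operator --- assembled from the partials $\partial/\partial x_v$ together with the incidence and orientation data of $\mu$ --- applied to a ``master'' rational function, and in the degree-doubling case it exhibits $\deg(m^2)$ as a perfect square in $\fld'$, hence nonzero for $m\neq 0$ in $\BR(\mu)$, because squaring is additive and Frobenius-semilinear. In general characteristic I would replace ordinary partials by Hasse--Schmidt (divided-power) derivations --- this is where the formal square roots, or a comparable device, take over the role of the Frobenius --- and obtain a formula that still computes the pairing against $[\mu]$ and still certifies $\deg(m^2)\neq0$, and $\deg(m^2\ell^{d-2k})\neq0$ after absorbing powers of the generic $\ell$, for every nonzero monomial $m$. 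Anisotropy at every monomial-supported subspace of every $\BR^k(\mu)$, $k\le d/2$, follows. Since anisotropy at all ideals --- in particular monomial ones --- is strictly stronger than the biased pairing property, we obtain the biased pairing property of $\BR(\mu)$ for free; this is what upgrades the monomial-level non-vanishing to the full isomorphism, and, together with the induction along the vertex links $\Lk_\mu(v)$ --- themselves simplicial cycles of dimension $d-2$, with trivial base case $d\le2$ --- handles the cycle setting, in which $\AR(\mu)$ itself need not be Gorenstein.

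I expect the decisive obstacle to be precisely the passage of the formula beyond characteristic $2$. The characteristic-$2$ ``miracle'' is that $a\mapsto\deg(a^2)$ is additive with values that are visibly squares, so anisotropy is linear algebra over $\fld'$; in other characteristics one must (i) find the right divided-power/Hasse-derivative substitute for squaring and the field extension it forces, (ii) control the signs, invisible in characteristic $2$, that govern the graded-commutative face ring and the cycle relations, so that the output is a genuine nonzero element of $\fld'$, and (iii) verify that the formula descends to the Gorenstein quotient $\BR(\mu)$ and detects non-degeneracy there, not merely on the ambient ring $\AR(\mu)$. A secondary difficulty is uniformity: the theorem asks for a single $\ell$ that works for all $k\le d/2$ simultaneously, which is why $\ell$ and the linear system are taken generic \emph{within} $\fld'$ and why the propagation features of the biased pairing property, rather than a degree-by-degree analysis, are used to finish.
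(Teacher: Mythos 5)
Your high-level framing is right in spirit --- combine a Papadakis--Petrotou-type differential formula with the biased pairing machinery of \cite{AHL}, over a transcendental extension $\fld(\V)$, and use the ``two perspectives'' (coefficients as indeterminates versus as scalars) to conclude by genericity --- but the core step of your plan is exactly what the paper cannot do, and says so explicitly. You want to prove anisotropy of $\BR(\mu)$ in arbitrary characteristic, by replacing the Frobenius-square trick with Hasse--Schmidt (divided-power) derivations or formal square roots, and then deduce Lefschetz. The paper proves anisotropy \emph{only} in characteristic $2$ (Theorem~\ref{mthm:as}) and leaves the general-characteristic anisotropy open as Conjecture 7.2; the introduction states outright, ``In general characteristic, we are unable to prove such a statement.'' Your speculative divided-power substitute is not a repair the authors found, and nothing in your outline indicates how it would overcome the concrete obstruction they identify: for $k=d/2$ the characteristic-$2$ argument works because the differential $\partial_\sigma^\tau\deg(u^2)$ is \emph{linear in $u$} (the Hessian cross-terms vanish mod $2$), so the computation is independent of how $u$ is represented; outside characteristic $2$ that linearity is lost, so $\deg(u^2)$ is not controlled by a single derivative of a presentation-independent quantity. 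Hasse--Schmidt derivations restore a Leibniz calculus but do not restore this presentation-independence, which is the actual crux.

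What the paper does instead, in general characteristic, is prove the strictly weaker \emph{biased pairing property}, and this requires genuinely different geometric input your proposal omits. First, the reduction from general degree $k$ to the ``middle'' degree is not via induction on vertex links $\lk_\mu(v)$; it is via the suspension trick (Lemma~\ref{lem:midred}), which trades a degree-$k$ Hall--Laman statement for a middle-degree biased pairing for $\susp\mu$ relative to $\mathbf{s}\ast|\mu|$, iterated $d-2k-1$ times. (The Kronecker/perturbation lemma and vertex-link propagation are mentioned as an alternative route, not the one executed.) Second, the middle-degree biased pairing is established by a three-step argument you don't have: normalize $u$ so its restriction to $\st_\sigma\mu$ is a single monomial with coefficient $1$; \emph{specialize} the transcendental coordinates so that, via Barnette's theorem (Proposition~\ref{prp:Bar}), $|\mu'|$ is a geometric subdivision of $\partial\Delta_{2k-1}$ and one can push $u$ along the inverse subdivision using Lemma~\ref{lem:PLinv} / Proposition~\ref{prp:PLinv}; then renormalize to make $\bar u$ \emph{compatible} in the sense of Section 5.2, at which point Corollary~\ref{cor:form} forces one of two explicit pairings, each landing in the monomial ideal, to be nonzero. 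The role of the ``Key Observation'' (that differentiating coefficients and swapping $\V\leftrightarrow\V'$ preserves monomial ideals) is precisely to make the produced pairing partner lie in the same $\KK^\ast(\mu,\Ga)$, which is all that biased pairing requires --- and is weaker than the anisotropy your plan demands. So the gap is concrete: your plan hinges on a characteristic-free anisotropy that is not available, and it is missing the suspension reduction, the Barnette specialization/unsubdivision step, and the ``compatible representative'' normalization that let the paper get by with biased pairing instead.
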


Here, $\BR$ denotes the Gorensteinification of $\AR$, that is, the quotient of $\AR$ by the annihilator of the fundamental class. This generalizes the generic Lefschetz theorem for spheres (in which case $\BR^\ast(\Sg)= \AR^\ast(\Sg)$) and manifolds in \cite{AHL} and the characteristic two case for spheres in \cite{PP}. An important special case concerns pseudomanifolds. An \Defn{orientable pseudomanifold} is a pseudomanifold with a nontrivial fundamental class, with respect to a fixed characteristic. It is connected if that fundamental class is furthermore unique. 

\begin{mcor}\label{mthm:glp}
Consider $\fld$ any infinite field, $\Sg$ any $(d-1)$-dimensional orientable and connected pseudomanifold over $\fld$, and the associated graded commutative face ring~$\fld[\Sg]$. 

Then there exists an Artinian reduction $\AR(\Sg)$, and an element $\ell$ in $\AR^1(\Sg)$, such that 
for every $k\le\nicefrac{d}{2}$, we have the \emph{hard Lefschetz property:} We have an isomorphism 
			\[\BR^k(\Sg)\ \xrightarrow{\ \cdot \ell^{d-2k} \ }\ \BR^{d-k}(\Sg). \]
\end{mcor}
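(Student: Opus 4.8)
The plan is to deduce Corollary~\ref{mthm:glp} directly from Main Theorem~\ref{mthm:gl} by recognizing an orientable connected pseudomanifold as the support of its own fundamental cycle. Let $\Sg$ be a $(d-1)$-dimensional orientable and connected pseudomanifold over $\fld$. By orientability, $\Sg$ carries a nontrivial fundamental class, represented by a simplicial $(d-1)$-cycle $\mu=\sum_{F} a_F\, F$ with coefficients in $\fld$, the sum ranging over the facets of $\Sg$; by connectedness this class is unique up to a scalar.

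First I would check that $\mu$ is \emph{full}, that is, $a_F\neq 0$ for every facet $F$, so that its support satisfies $|\mu|=\Sg$. This is exactly where the pseudomanifold hypothesis is used: every ridge $R$ of $\Sg$ lies in precisely two facets, so $\partial\mu=0$ forces the two coefficients attached to them to agree up to sign; since the dual graph of $\Sg$ --- facets, with an edge whenever two facets share a ridge --- is connected, it follows that all the $a_F$ are nonzero as soon as one of them is, which is the case because $\mu\neq 0$. Hence $\mu$ is a simplicial cycle of dimension $d-1$ over $\fld$ in the sense of Main Theorem~\ref{mthm:gl} --- indeed a minimal one --- and $|\mu|=\Sg$ gives $\fld[|\mu|]=\fld[\Sg]$, hence $\BR^\ast(\mu)=\BR^\ast(\Sg)$.

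It then only remains to invoke Main Theorem~\ref{mthm:gl} for $\mu$: it produces an Artinian reduction $\AR(\mu)$, which we rename $\AR(\Sg)$, together with an element $\ell\in\AR^1(\mu)=\AR^1(\Sg)$ for which $\cdot\,\ell^{\,d-2k}\colon \BR^k(\mu)\to\BR^{d-k}(\mu)$ is an isomorphism for all $k\le\nicefrac{d}{2}$; under the identification $\BR^\ast(\mu)=\BR^\ast(\Sg)$ this is precisely the hard Lefschetz property asserted for $\Sg$. The only step that needs any argument is the fullness of $\mu$, and this is exactly what the hypotheses ``orientable'' and ``connected'' are there to guarantee --- they ensure that Main Theorem~\ref{mthm:gl} can be applied to $\fld[\Sg]$ itself, rather than to a proper subcomplex of $\Sg$; no machinery beyond Main Theorem~\ref{mthm:gl} is required.
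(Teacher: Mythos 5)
Your proof is essentially correct and matches the route the paper intends (the paper states Corollary~\ref{mthm:glp} without a written proof, presenting it as a special case of Main Theorem~\ref{mthm:gl}): take $\mu$ to be the fundamental class and apply the theorem. Two small remarks. First, the paper's definition of ``connected'' for a pseudomanifold is that the fundamental class is \emph{unique} (up to scalar), which is not literally the dual-graph connectedness your fullness argument invokes; the two coincide when $\Sg$ decomposes into orientable strongly connected components, but differ in edge cases, so you should say which notion you are using or reconcile them. Second, and more to the point, the fullness step $|\mu|=\Sg$ is actually unnecessary: the unnamed Proposition in Section~\ref{sec:basics} already gives a canonical isomorphism $\BR^\ast_\mu(\Sg)\cong\BR^\ast_\mu(|\mu|)=\BR^\ast(\mu)$ induced by restriction for \emph{any} complex $\Sg$ containing the support of $\mu$, so the conclusion of the corollary follows from Theorem~\ref{mthm:gl} whether or not $\mu$ is full. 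Your fullness argument is nonetheless a clean, correct extra and yields the stronger equality $\fld[|\mu|]=\fld[\Sg]$ under the standard hypotheses, so the proposal is fine as written --- it just does a bit more work than the paper requires.
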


The Lefschetz property has many applications, implying the Gr\"unbaum-Kalai-Sarkaria conjecture, g-conjecture and many more, but we shall not discuss these here, and direct the interested reader to \cite{AHL} for a derivation of these implications. We will provide some new applications, to doubly Cohen-Macaulay complexes. Note that orientability is automatic over characteristic two. Alas, more miracles happen in that characteristic. We have the following stronger result:

	\begin{mthm}\label{mthm:as}
Consider $\fld$ any field of characteristic two, $\mu$ any $(d-1)$-dimensional cycle over $\fld$, and the associated graded commutative face ring~$\fld[|\mu|]$. Then, for some field extension $\fld'$ of $\fld$, we have an Artinian reduction $\AR(\mu)$ that is anisotropic, i.e. for every nonzero element $u\in \BR^k(\mu)$, $k\le \frac{d}{2}$, we have 
\[u^2\ \neq\ 0.\]
	\end{mthm}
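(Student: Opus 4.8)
The plan is to adapt the characteristic-two argument of Papadakis--Petrotou \cite{PP} from homology spheres to arbitrary cycles. Let $V$ be the vertex set of $|\mu|$, and pass to the rational function field $\fld' = \fld_0(a_{i,v} : 1\le i\le d,\ v\in V)$, where $\fld_0$ is a perfect field containing $\fld$ (e.g.\ its perfect closure) and the $a_{i,v}$ are algebraically independent. Take $\AR(\mu)$ to be the Artinian reduction of $\fld'[|\mu|]$ by the generic linear system of parameters $\theta_i = \sum_{v\in V} a_{i,v}\,x_v$. Since $\mu$ is a cycle, its fundamental class survives to $\BR(\mu)$ and yields a linear functional $\int\colon\BR^d(\mu)\to\fld'$ for which the Poincar\'e pairing $\BR^k(\mu)\times\BR^{d-k}(\mu)\to\fld'$ is perfect. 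Writing a class in $\BR^k(\mu)$ as $u=\sum_F c_F x_F$ with $F$ ranging over $k$-element faces of $|\mu|$ and $c_F\in\fld'$, and using $u^2=\sum_F c_F^2 x_F^2$ in characteristic two, the anisotropy assertion at degree $k$ unwinds to: if $u$ does not represent $0$, then $\int(u^2 x_G)\neq 0$ for some $(d-2k)$-element face $G$ (with $x_G=1$ when $k=d/2$). So everything hinges on the functionals $\int(x_F^2 x_G)$ on degree-$d$ monomials.

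The key technical input is a closed formula for this degree map, generalizing the one in \cite{PP}. Localizing $\int$ at the facets of $|\mu|$ by Cramer's rule applied to $\theta_1,\dots,\theta_d$, one obtains $\int(m)=\sum_\sigma \mu_\sigma\,\varphi_\sigma(m)$, where $\sigma$ runs over the facets of $|\mu|$, the $\mu_\sigma\in\fld$ are the coefficients of the cycle $\mu$, and $\varphi_\sigma(m)$ is an explicit rational function in the $a_{i,v}$ assembled from $m$ and the minor $\det(a_{i,v})_{v\in\sigma}$. Working with a cycle rather than a sphere costs nothing here: the degree map is additive along $\mu$, so the facet-localized expression is the same as in the sphere case except that the orientation signs $\pm1$ are replaced by the weights $\mu_\sigma$. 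In characteristic two the derivations $\partial/\partial a_{i,v}$ annihilate squares, and differentiating the facet expansion of $\int(x_\sigma^2)$ in these variables recovers $\int$ of the neighbouring squared monomials; iterating, one gets $\int(x_F^2 x_G)=\mathcal{D}_{F,G}(\Omega_\mu)$ for a fixed canonical rational function $\Omega_\mu\in\fld'$ depending only on $\mu$ and an explicit constant-coefficient differential operator $\mathcal{D}_{F,G}$ of order $|F|$ in the $a_{i,v}$.

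Anisotropy then falls out of a Frobenius computation. Expand elements of $\fld'$ in the $\fld'^2$-basis $\{a^\epsilon=\prod_{i,v}a_{i,v}^{\epsilon_{i,v}} : \epsilon\in\{0,1\}^{\,dn}\}$ and write $\int(x_F^2 x_G)=\sum_\epsilon q_{F,G,\epsilon}^{\,2}\,a^\epsilon$; since the $c_F^2$ lie in $\fld'^2$ and Frobenius is additive, $\int(u^2 x_G)=\sum_\epsilon\bigl(\sum_F c_F\,q_{F,G,\epsilon}\bigr)^{2} a^\epsilon$, so the vanishing of $\int(u^2 x_G)$ for all $G$ is equivalent to the $\fld'$-linear system $\sum_F q_{F,G,\epsilon}\,c_F=0$ over all pairs $(G,\epsilon)$. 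It then remains to show that the solution space of this system is exactly the space of coefficient vectors representing $0$ in $\BR^k(\mu)$ --- equivalently, that $\Omega_\mu$ and the derivatives occurring in the $\mathcal{D}_{F,G}$ detect every nonzero class. Since the $\mathcal{D}_{F,G}$ shift the $a^\epsilon$-support of $\Omega_\mu$ in a way governed by the combinatorics of $F$ and $G$, this is a rank computation for a completely explicit matrix; for spheres it is carried out in \cite{PP}, and in general one must check that the weights $\mu_\sigma$ and the possibly highly singular, non-manifold support $|\mu|$ do not produce cancellations in $\Omega_\mu$ that drop this rank. The argument runs uniformly for all $k\le d/2$; alternatively, once anisotropy at the critical degree $\lfloor d/2\rfloor$ is known, the lower degrees follow formally by combining it with the biased-pairing and Lefschetz machinery of \cite{AHL} --- which gives injectivity of multiplication by $\ell^{\,j}$ below the middle --- together with the characteristic-two identity $(u\ell^{\,j})^2=u^2\ell^{\,2j}$ and Poincar\'e duality in the Gorenstein ring $\BR(\mu)$.

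The step I expect to be the main obstacle is exactly this last non-degeneracy claim: that replacing the orientation class of a sphere by an arbitrary $\fld$-valued cycle supported on a complex $|\mu|$ which need not be a manifold, or even pure in a helpful way, preserves the $\fld'^2$-independence of the relevant derivatives of $\Omega_\mu$. Building the facet-localization formula, tracking its behaviour under $\partial/\partial a_{i,v}$, and doing the Frobenius bookkeeping are, by comparison, fairly mechanical adaptations of \cite{AHL,PP}.
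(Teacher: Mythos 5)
Your strategy---pass to the rational function field $\fld(\V)$, exploit the additivity of squaring in characteristic two, and relate $\deg(u^2)$ to simpler data via differentiation in the vertex coordinates---matches the overall shape of the paper's argument. But you organize it as a direct port of the Papadakis--Petrotou ``explicit global formula plus matrix rank'' machinery (the canonical function $\Omega_\mu$, the operators $\mathcal{D}_{F,G}$, the expansion in the $\fld'^2$-basis $\{a^\epsilon\}$), and you leave the single load-bearing step unproved. You yourself flag it as the main obstacle: one must show that replacing the orientation class of a sphere by an arbitrary cycle does not collapse the $\fld'^2$-independence of the relevant derivatives of $\Omega_\mu$. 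That is not a technicality to be filled in later---that nondegeneracy claim \emph{is} the theorem, so as written the proposal has a genuine gap.

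The paper avoids the rank computation entirely and closes this step by a different mechanism. From Lee's facet formula it derives the pointwise identity $\partial_\sigma^\tau \deg(\x_\tau^2) = \deg(\x_\tau\x_\sigma)^2$. In characteristic two the cross-terms of $u^2 = \bigl(\sum \lambda_\tau \x_\tau\bigr)^2 = \sum \lambda_\tau^2 \x_\tau^2$ vanish, and the squared coefficients $\lambda_\tau^2$ are annihilated by the derivations, so after a normalization that leaves a single monomial of $u$ visible in $\st_\sigma\mu$ the identity promotes to $\partial_\sigma^\tau\deg(u^2) = \deg(\x_\sigma u)^2$. The nondegeneracy is then outsourced to something already known: by construction of the Gorensteinification, $\BR^\ast(\mu)$ is a Poincar\'e duality algebra spanned by squarefree monomials, so every nonzero $u\in\BR^k(\mu)$ satisfies $\deg(\x_\sigma u)\neq 0$ for some face $\sigma$. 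A single nonvanishing derivative then witnesses $\deg(u^2)\neq 0$. This is precisely the idea your sketch is missing, and it is also what makes the passage from spheres to cycles free rather than an open issue: all the degeneracy caused by a singular or impure $|\mu|$ is absorbed when you quotient $\AR$ down to $\BR$, and Poincar\'e duality of $\BR$ holds for any cycle. Your ``Frobenius bookkeeping'' paragraph and the reduction of lower degrees to the middle via Lefschetz injectivity are fine, but they surround a hole rather than fill it.
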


That this is stronger than the Lefschetz theorem is a consequence of the characterization theorem of biased pairing theory, or alternatively the Kronecker/perturbation lemma, both of which we shall recall. In general characteristic, we are unable to prove such a statement. Instead, we prove that every $u$ pairs with another that is sufficiently similar to $u$, essentially related by a change in coefficients, so that they lie in the same monomial ideal.

\section{Basic Notions}\label{sec:basics}
	
Face rings are  the main object of the paper. Our treatment is standard except for the relative case, in which we follow \cite{AY}. Fix a field $\fld$. 

\begin{dfn}
	Let $\varDelta$  be a simplicial complex of dimension $d-1$. 
 Define the polynomial ring $\fld [x_v \mid v\in \varDelta^{(0)}]$, with variables indexed by vertices of $\varDelta$. The \Defn{non-face ideal} $I_\varDelta$ of $\varDelta$ is the ideal generated by all elements of the form $x_{v_1}\cdot x_{v_2} \cdot\ldots\cdot x_{v_j}$ where $\{v_1,\ldots,v_j\}$ is not a face of $\varDelta$. 	
	The \Defn{face ring} of $\varDelta$ is
	\[\fld [\varDelta] \coloneqq \fld [x_v \mid v\in \varDelta^{(0)}] / I_\varDelta.\]
	
	If $\Psi=(\varDelta,\Gamma)$ is a relative complex, the \Defn{relative face module} of $\Psi$ is defined by $I_\Gamma / I_\varDelta$. This is an ideal of $\fld [\varDelta]$.
\end{dfn}

To further out notational abilities, let us recall the definition of a star in a (relative) simplicial complex $\Psi=(\De,\Ga)$:
The star of a simplex $\tau$ within $\Psi$ is \[\mathrm{st}_\tau \Psi = (\{\sigma \in \De: \sigma \cup \tau \in \De\},\{\sigma \in \Ga: \sigma \cup \tau \in \Ga\}).\] Similarly, the link is 
\[\mathrm{lk}_\tau \Psi =  (\{\sigma \in \De: \sigma \sqcup \tau \in \De\},\{\sigma \in \Ga: \sigma \sqcup \tau \in \Ga\}).\]

Now, assume that $\fld$ is infinite. Consider an Artinian reduction $\AR^\ast(\varDelta)$ of a face ring $\fld[\varDelta]$ with respect to a linear system of parameters $\Theta$. It is instructive to think of $\AR^\ast(\varDelta)$ as a geometric realization of $\varDelta$ in $\fld^d$, with the coefficients of $x_i$ in $\Theta$ giving the coordinates of the vertex $i$, recorded in a matrix $\V$.

By definition, a simplicial cycle $\mu$ of $\Delta$ over $\fld$ is a nonzero element of  $H_{d-1}(\varDelta;\fld)$. 
Evaluating at $\mu$, we get a surjective $\fld$-linear map 
\[
   H^{d-1}(\varDelta;\fld) \twoheadrightarrow \fld.
\]
Via the canonical isomorphism
\[ 
          H^{d-1}(\varDelta;\fld)\ \cong\ \AR^d(\varDelta),
\]
see \cite[Section 3.9]{AHL}, \cite{TW},  there is an induced surjective $\fld$-linear map 
\[
    \mu^\vee :   \AR^d(\varDelta) \twoheadrightarrow \fld.
\]

Composing the multiplication map    $\; \AR^k(\varDelta)\ \times\ \AR^{d-k}(\varDelta) \to \AR^{d}(\varDelta) \;$  
with  $ \mu^\vee $,  we get a bilinear pairing 
\[
          \AR^k(\varDelta)\ \times\ \AR^{d-k}(\varDelta)\  \to \fld.
\]

We denote by  $L^k$ the  set of elements of    $\AR^k(\varDelta)$ that have zero pairing with all elements
of   $\; \AR^{d-k}(\varDelta)$.   The direct sum $\; L^\ast = \oplus_k L^k \;$ is a homogeneous 
ideal of $\; \AR^\ast(\varDelta)$,   and we set
\[
          \BR^\ast_\mu(\varDelta) =\AR^\ast(\varDelta) / L^\ast.
\]
The algebra $\AR^\ast(\varDelta)$ has been \Defn{Gorensteinified}, for lack of a better word.
This does not depend on the simplicial complex $\varDelta$. The following is immediate.
\begin{prp}
Consider a simplicial complex $\varDelta$ as above, and a cycle $\mu$ in it. Then the restriction
\[\AR^\ast(\varDelta)\ \longtwoheadrightarrow\ \AR^\ast(|\mu|)\]
where $|\mu|$ denotes the support of $\mu$, that is, the minimal simplicial complex containing $\mu$, induces an isomorphism of Gorensteinifications. In particular, we have
\[\BR^\ast_\mu(\varDelta)\ \cong\ \BR^\ast_\mu(|\mu|).\]
\end{prp}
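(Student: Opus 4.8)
The plan is to realise both Gorensteinifications as quotients of the single ring $\AR^\ast(|\mu|)$. Choose a linear system of parameters for $\fld[\varDelta]$ whose restriction to the variables supported on $|\mu|$ is the one used to form $\AR^\ast(|\mu|)$; then the quotient map $\fld[\varDelta]\to\fld[|\mu|]$ descends to a surjection $r\colon\AR^\ast(\varDelta)\longtwoheadrightarrow\AR^\ast(|\mu|)$ of graded $\fld$-algebras. Writing $L^\ast\subseteq\AR^\ast(\varDelta)$ and $M^\ast\subseteq\AR^\ast(|\mu|)$ for the respective ideals of elements pairing trivially (with respect to $\mu$, and to $\mu$ viewed as a cycle of $|\mu|$), it suffices to prove the two statements $\ker r\subseteq L^\ast$ and $r(L^\ast)=M^\ast$. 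Granting these,
\[
\AR^\ast(\varDelta)/L^\ast \;=\; \bigl(\AR^\ast(\varDelta)/\ker r\bigr)\big/\bigl(L^\ast/\ker r\bigr)\;\cong\;\AR^\ast(|\mu|)/r(L^\ast)\;=\;\AR^\ast(|\mu|)/M^\ast,
\]
and the resulting isomorphism $\BR^\ast_\mu(\varDelta)\cong\BR^\ast_\mu(|\mu|)$ is the one induced by $r$.

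The only input that is not formal is the compatibility of the evaluation functionals. Since $|\mu|$ is the minimal subcomplex carrying $\mu$, the cycle $\mu\in H_{d-1}(\varDelta;\fld)$ is the image under the inclusion $|\mu|\hookrightarrow\varDelta$ of a cycle $\bar\mu\in H_{d-1}(|\mu|;\fld)$; dually, evaluation at $\mu$ on $H^{d-1}(\varDelta;\fld)$ factors through the restriction map $H^{d-1}(\varDelta;\fld)\to H^{d-1}(|\mu|;\fld)$ followed by evaluation at $\bar\mu$. By naturality of the identification $H^{d-1}(-;\fld)\cong\AR^d(-)$ under inclusions of subcomplexes (\cite[Section 3.9]{AHL}, \cite{TW}), which in top degree identifies $r$ with the cohomological restriction map, this translates into
\[
\mu^\vee\;=\;\bar\mu^{\,\vee}\circ r\qquad\text{on}\quad\AR^d(\varDelta),
\]
where $\bar\mu^{\,\vee}\colon\AR^d(|\mu|)\to\fld$ is the functional attached to $\mu$ as a cycle of $|\mu|$.

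With this factorisation the two remaining claims are one-line computations. If $a\in\ker r$, then for every $b\in\AR^{d-k}(\varDelta)$ we have $\mu^\vee(ab)=\bar\mu^{\,\vee}\bigl(r(a)r(b)\bigr)=0$, so $\ker r\subseteq L^\ast$. More generally, since $r$ is surjective every element of $\AR^{d-k}(|\mu|)$ is of the form $r(b)$, and the identity $\mu^\vee(ab)=\bar\mu^{\,\vee}\bigl(r(a)r(b)\bigr)$ shows that $a\in L^k$ if and only if $r(a)\in M^k$; in particular $r(L^\ast)=M^\ast$. This proves the proposition. The only place where any care is needed — the ``obstacle'', such as it is for a statement this soft — is the initial choice of compatible linear systems of parameters, so that $r$ exists as a morphism of Artinian reductions and the identification $H^{d-1}(|\mu|;\fld)\cong\AR^d(|\mu|)$ is available for $|\mu|$; both are genericity facts already subsumed in the cited background.
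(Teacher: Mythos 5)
Your proof is correct. The paper declares the proposition ``immediate'' and offers no argument; what you write out --- choosing compatible parameters so the restriction $r\colon\AR^\ast(\varDelta)\twoheadrightarrow\AR^\ast(|\mu|)$ exists, factoring the degree functional as $\mu^\vee=\bar\mu^{\,\vee}\circ r$, and then reading off $\ker r\subseteq L^\ast$ and $r(L^\ast)=M^\ast$ --- is precisely the routine verification the paper leaves implicit, and it is the natural one.
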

For ease of notation, we shall abbreviate $\BR^\ast(\mu):= \BR^\ast_\mu(|\mu|)$.

For simplicial complexes such that   $\;\dim_{\fld}  H_{d-1}(\varDelta;\fld) = 1$, 
we set $\mu$ to be a fixed  nonzero  element of  $H_{d-1}(\varDelta;\fld)$.
This  is, in particular, the case for connected  orientable 
pseudomanifolds.

Notice that $\BR^\ast(\mu)$ is a Poincar\'e duality algebra, with last nonzero graded component 
in degree $d$.

We will now use  the simplicial cycle $\mu$ 
to  define a  natural  $\fld$-linear isomorphism
\[ 
   \deg:\BR^d(\mu)\ \longrightarrow\  \fld.
\]

\textbf{Convention.} Note that the degree map is readily described by the coefficients of the simplicial \emph{cycle}: It is enough to define it on cardinality $d$ faces $F$, as face rings are generated by squarefree monomials \cite[Section~4.3]{Lee} \footnote{Although Lee proves this only in characteristic zero, the argument goes through in general. We shall use his ideas several times in this paper for general characteristic, provided they apply there. We note that all the ideas we use readily extend, and without any modification of the arguments.}. And for a cardinality $d$ face $F$, we have
\[\deg(\x_F)\ =\ \frac{\mu_F}{|\V_{|F}|},\]
where $\mu_F$ is the oriented coefficient of $\mu$ in $F$, and we fix an order on the vertices of $\mu$ and compute the sign with respect to the fundamental class, and the determinant $|\V_{|F}|$ of the minor $\V_{|F}$ of $\V$ corresponding to $F$.

For instance, if $\mu$ is the fundamental class of a pseudomanifold, we have canonically
\[\deg(\x_F)\ =\ \frac{\mr{sgn}(F)}{|\V_{|F}|}.\] 

\textbf{Two perspectives.} There are, of course, two perspectives that we shall make use of: We can consider 
$\AR^\ast(\varDelta)$ over $\fld$ as functions in $\V$, including the degree map in particular, or we consider 
$\AR^\ast(\varDelta)$ over $\fld(\V)$, the field of rational functions associated to the transcendental extension of $\fld$ by the entries of $\V$. It is useful to keep this dichotomy in mind.

\section{Application: Doubly Cohen-Macaulay complexes and the cycle filtration}\label{sec:cycles}

Let us sketch the main application of Theorem~\ref{mthm:gl}: we show the Lefschetz theorem for doubly Cohen-Macaulay complexes.

Consider an $m$-dimensional subspace $\Mu$ of $H_{d-1}(\varDelta)$, or dually, a map 
\[ \Mu^\vee: H^{d-1}(\varDelta)\ \longtwoheadrightarrow\ \fld^m.\]
Then we have the quotient
$\BR^\ast(\Mu)$ of $\AR^\ast(\varDelta)$ induced as before as the quotient by the annihilator under the pairing
\[\AR^k(\Sg)\ \times\ \AR^{d-k}(\Sg)\ \longrightarrow\ \fld^m.\]

\begin{cor}
$\BR^\ast(\Mu)$ has the top-heavy Lefschetz property over every sufficiently large field extension of $\fld$ with respect to a sufficiently general position of parameters, that is, there is an $\ell \in \BR^1(\Mu)$ so that
\[\BR^k(\Mu)\ \xrightarrow{\ \cdot \ell^{d-2k}\ }\ \BR^{d-k}(\Mu)\]
is injective.
\end{cor}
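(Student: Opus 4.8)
The plan is to deduce this from the single--cycle Hard Lefschetz statement, Theorem~\ref{mthm:gl}, applied to the members of a basis of $\Mu$. First I would fix a basis $\mu_1,\dots,\mu_m$ of $\Mu$, each $\mu_j$ being a simplicial cycle of $\varDelta$ of dimension $d-1$. Writing the evaluation map $\Mu^\vee\colon\AR^d(\varDelta)\twoheadrightarrow\fld^m$ in this basis, its $j$-th coordinate is precisely $\mu_j^\vee$; hence an element of $\AR^k(\varDelta)$ lies in $L^k(\Mu)$, the set of elements with zero pairing against all of $\AR^{d-k}(\varDelta)$, if and only if it lies in $L^k(\mu_j)$ for every $j$. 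Thus $L^\ast(\Mu)=\bigcap_{j=1}^m L^\ast(\mu_j)$, and the quotient maps assemble into an injective morphism of graded $\AR^\ast(\varDelta)$-algebras
\[
  \iota\colon\ \BR^\ast(\Mu)\ \hookrightarrow\ \bigoplus_{j=1}^m \AR^\ast(\varDelta)/L^\ast(\mu_j)\ \cong\ \bigoplus_{j=1}^m \BR^\ast(\mu_j),
\]
the last identification being the isomorphism of Gorensteinifications from the Proposition. Since $\iota$ is graded and $\AR^\ast(\varDelta)$-linear, it intertwines multiplication by any fixed $\ell\in\AR^1(\varDelta)$; so it will suffice to produce one Artinian reduction $\AR(\varDelta)$ and one $\ell\in\AR^1(\varDelta)$ for which $\cdot\,\ell^{d-2k}$ is an isomorphism on each $\BR^\ast(\mu_j)$ simultaneously, for all $k\le d/2$.

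To arrange such a uniform choice I would invoke the openness of the Lefschetz condition. For fixed $j$, the restriction of a generic linear system of parameters of $\fld[\varDelta]$ is again one of $\fld[|\mu_j|]$ (both loci are nonempty Zariski opens), and, in suitable bases, the map $\cdot\,\ell^{d-2k}\colon\BR^k(\mu_j)\to\BR^{d-k}(\mu_j)$ is given by a square matrix whose entries are polynomials in the coefficients of $\Theta$ and $\ell$; the locus where this matrix is invertible for every $k\le d/2$ is therefore Zariski open. By Theorem~\ref{mthm:gl} it is non-empty --- indeed the proof of that theorem establishes the Lefschetz property already over the rational function field $\fld(\V)$ generated by the entries of the parameter matrix, i.e.\ at the generic point --- so it contains a dense open subset $U_j$ of parameter space. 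Over a sufficiently large field extension $\fld'$ of $\fld$, the finite intersection $U_1\cap\dots\cap U_m$ remains dense open, hence non-empty, and any $(\Theta,\ell)$ in it is a simultaneous Lefschetz datum. (Alternatively: apply Theorem~\ref{mthm:gl} to each $\mu_j$ separately, observe that the resulting Lefschetz locus for $\mu_j$, being non-empty, is dense open over an appropriate extension, and intersect the finitely many of them.)

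Granting such a pair $(\Theta,\ell)$, the conclusion is formal: for $0\neq u\in\BR^k(\Mu)$ with $k\le d/2$, injectivity of $\iota$ gives a component $u_j:=\iota(u)_j\neq 0$ in $\BR^k(\mu_j)$; Hard Lefschetz for $\BR^\ast(\mu_j)$ then gives $\ell^{d-2k}u_j\neq 0$ in $\BR^{d-k}(\mu_j)$; and since $\iota$ is a ring homomorphism sending $\ell$ to $\ell$, the $j$-th component of $\iota(\ell^{d-2k}u)$ equals $\ell^{d-2k}u_j\neq 0$, whence $\ell^{d-2k}u\neq 0$. Hence $\cdot\,\ell^{d-2k}\colon\BR^k(\Mu)\to\BR^{d-k}(\Mu)$ is injective for all $k\le d/2$, which is the asserted top-heavy Lefschetz property. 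The step I expect to need genuine care is exactly this passage from the existential conclusion of Theorem~\ref{mthm:gl} to a single pair of parameters valid for all $m$ basis cycles at once: cycle by cycle the Artinian reductions and Lefschetz elements need not agree, and reconciling them is precisely what openness of the Lefschetz locus --- ultimately the fact that Theorem~\ref{mthm:gl} is proved with the linear system treated as a variable --- buys us. Everything else is bookkeeping.
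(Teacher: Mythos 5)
Your proposal follows the same route as the paper: embed $\BR^\ast(\Mu)$ into $\bigoplus_{j}\BR^\ast(\mu_j)$ via a generating system for $\Mu$, observe that multiplication by $\ell^{d-2k}$ commutes with this embedding, and reduce to Theorem~\ref{mthm:gl} applied to each $\mu_j$. You are in fact more careful than the paper's terse proof, which leaves implicit the need for a \emph{single} Artinian reduction and Lefschetz element valid simultaneously for all $\mu_j$; your appeal to Zariski-openness of the Lefschetz locus (nonempty because Theorem~\ref{mthm:gl} already holds over the generic point $\fld(\V)$) is exactly the detail that justifies the commuting square.
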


\begin{proof}
Consider a generating system $(\mu_i)_{i\in I}$ for $\Mu$,
Then we have an injection
\[\BR^\ast(\Mu)\ \longrightarrow\ \bigoplus_{i\in I} \BR^\ast(\mu_i),\]
and hence
\[\begin{tikzcd}
 \BR^k(\Mu) \arrow{r}{\ \cdot  \ell^{d-2k}\ } \arrow[hook]{d}{} & \BR^{d-k}(\Mu) \arrow[hook]{d}{} \\
\bigoplus_{i\in I}\BR^k(\mu_i) \arrow{r}{\ \cdot  \ell^{d-2k}\ } & \bigoplus_{i\in I}\BR^{d-k}(\mu_i)
\end{tikzcd}
\]
which implies injectivity on the top if it is present on the bottom.
\end{proof}

\subsection{Doubly Cohen-Macaulay complexes}

A simplicial complex is called \Defn{$s$-Cohen-Macaulay} if it is Cohen-Macaulay and after the removal of $s$ vertices, the complex is still Cohen-Macaulay of the same dimension.

For instance, a triangulated homology sphere is $2$-Cohen-Macaulay, also called \Defn{doubly Cohen-Macaulay} \cite[Chapter III.3]{Stanley96}. Stanley showed that doubly Cohen-Macaulay complexes are level, that is, for such a complex $\Sg$ of dimension $d-1$, the socle is concentrated in degree $d$. In other words, if $\Mu=\AR^d(\Sg)$, we have
\[\BR^\ast(\Mu)\ =\ \AR^\ast(\Sg).\]
From the last result, we conclude

\begin{cor}
Consider a doubly Cohen-Macaulay complex $\Sg$ of dimension $d-1$. Then
$\AR^\ast(\Sg)$ has the top-heavy Lefschetz property over any infinite field extension of $\fld$ with respect to a sufficiently general position of parameters, that is, there is an $\ell \in \AR_\Mu^1(\Sg)$ so that
\[\AR^k(\Sg)\ \xrightarrow{\ \cdot \ell^{d-2k}\ }\ \AR^{d-k}(\Sg)\]
is injective.
\end{cor}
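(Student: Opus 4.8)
The plan is to deduce this corollary directly from the immediately preceding corollary by combining two ingredients, both already available in the text. First I would recall the key structural fact about doubly Cohen-Macaulay complexes, namely Stanley's levelness result: for a doubly Cohen-Macaulay complex $\Sg$ of dimension $d-1$, the socle of the Artinian reduction $\AR^\ast(\Sg)$ is concentrated in top degree $d$. This is exactly the statement that, taking $\Mu = \AR^d(\Sg)$ to be the full top-degree component (which, by Poincaré-type duality considerations, corresponds to the whole of $H_{d-1}(\Sg)$ via the canonical identification $H^{d-1}(\Sg;\fld)\cong \AR^d(\Sg)$), the annihilator $L^\ast$ of the pairing into $\Mu\cong\fld^m$ is trivial, so that $\BR^\ast(\Mu) = \AR^\ast(\Sg)$. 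I would state this as the bridge between the two corollaries.

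Next I would simply invoke the previous corollary with this choice of $\Mu$. That corollary provides, over a sufficiently large field extension and for a sufficiently general linear system of parameters, an element $\ell \in \BR^1(\Mu)$ such that $\BR^k(\Mu) \xrightarrow{\,\cdot\ell^{d-2k}\,} \BR^{d-k}(\Mu)$ is injective for all $k\le d/2$. Substituting $\BR^\ast(\Mu) = \AR^\ast(\Sg)$ and $\BR^1(\Mu) = \AR^1(\Sg)$ (in the notation of the statement, $\AR^1_\Mu(\Sg)$) yields precisely the asserted top-heavy Lefschetz property: there is $\ell\in\AR^1_\Mu(\Sg)$ with $\AR^k(\Sg)\xrightarrow{\,\cdot\ell^{d-2k}\,}\AR^{d-k}(\Sg)$ injective. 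Unwinding the earlier corollary's proof, this injectivity is in turn obtained from Main Theorem~\ref{mthm:gl} applied to each cycle $\mu_i$ in a generating system of $\Mu$, together with the diagram chase through the injection $\BR^\ast(\Mu)\hookrightarrow\bigoplus_{i\in I}\BR^\ast(\mu_i)$.

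The only genuinely nontrivial input here is Stanley's theorem that doubly Cohen-Macaulay (more generally, level) complexes have socle concentrated in the top degree; everything else is formal. So the main obstacle — really the only point requiring care — is to confirm that "level" translates correctly into the vanishing of $L^\ast$ for the maximal choice of $\Mu$, i.e. that the socle of $\AR^\ast(\Sg)$ sitting in degree $d$ is exactly the statement that the Poincaré-type pairing $\AR^k(\Sg)\times\AR^{d-k}(\Sg)\to\AR^d(\Sg)$ is nondegenerate on the left for every $k$. Since a nonzero element of $\ker(\AR^k\to\mathrm{Hom}(\AR^{d-k},\AR^d))$ would produce a socle element in degree $k<d$, levelness forecloses this, giving $L^\ast=0$ and hence $\BR^\ast(\Mu)=\AR^\ast(\Sg)$. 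With that identification in hand the corollary is an immediate specialization of its predecessor, and no further computation is needed.
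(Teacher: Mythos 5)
Your proposal is correct and follows the paper's argument exactly: invoke Stanley's theorem that doubly Cohen--Macaulay complexes are level, deduce $\BR^\ast(\Mu)=\AR^\ast(\Sg)$ for $\Mu=\AR^d(\Sg)$, and then specialize the preceding corollary. One small caution in your bridging remark: a nonzero element of $L^k$ is not itself necessarily in the socle; one should instead pass to a nonzero element of maximal degree in the ideal it generates (which lies in $L^\ast$, hence misses degree $d$) to produce the forbidden socle element in some degree strictly below $d$ --- but this is a minor point and the conclusion $L^\ast=0$ stands.
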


In particular, the $g$-vector of a doubly Cohen-Macaulay complex is an $M$-vector \cite{Macaulay}, since
\[\AR^k(\Sg)\ \xrightarrow{\ \cdot \ }\ \AR^{k+1}(\Sg)\]
is injective for $k\le \frac{d}{2}$.

\section{Biased pairings and Hall Laman relations}

Let us recall the basics of biased pairing theory. More depth and breath is found in \cite[Section 5]{AHL}, but we repeat proofs where they are needed for our purposes.

\subsection{Biased Poincar\'e pairings} Recall: Let $\mu$ be a $d-1$-dimensional simplicial cycle, and $\BR^\ast(\mu)$ the associated Gorenstein ring. Then we have a pairing
	\[\BR^k(\mu)\ \times\ \BR^{d-k}(\mu)\ \longrightarrow\ \BR^{d}(\mu)\ \cong\ \R.\]
We say that $\mu$ has the \Defn{biased pairing property} in degree $k\le \frac{d}{2}$ with respect to some proper subcomplex $\varGamma$ of the support, 
if the pairing
	\begin{equation}\label{eq:ospd}
	\KK^{k}(\mu,\Ga)\ \times\ \KK^{d-k}(\mu,\Ga)\ \longrightarrow\ \KK^{d}(\mu,\Ga)
	\end{equation}
	is nondegenerate on the left. Here
		$\KK^{k}(\mu,\Ga)$ is the nonface ideal of $\Ga$ in $\BR^\ast(\mu)$.
	Notice:
\begin{prp}
For an ideal $\mathcal{I}$ in $\BR^\ast(\mu)$
the following are equivalent:
\begin{compactenum}[(1)]
\item The map
\[\mathcal{I}\ \longrightarrow\ \bigslant{\BR^\ast(\mu)}{\ann_{\BR^\ast(\mu)} \mathcal{I} }\] is an injection in degree $k$.
\item The map
\[\mathcal{I}\ \longrightarrow\ \bigslant{\BR^\ast(\mu)}{\ann_{\BR^\ast(\mu)} \mathcal{I} }\] is a surjection in degree $d-k$.
\item For every $x\in \mathcal{I}^k$, there exists a $y$ in $\mathcal{I}^{d-k}$ such that $x\cdot y \neq 0$.
\item $\mathcal{I}$ has the biased pairing property in degree $k$.
\end{compactenum}
\end{prp}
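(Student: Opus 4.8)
The plan is to reduce the whole statement to elementary linear algebra of the perfect Poincar\'e pairing on $\BR^\ast(\mu)$. Write $A \coloneqq \BR^\ast(\mu)$: it is a Poincar\'e duality algebra with socle in degree $d$, so for each $m$ the pairing $A^m \times A^{d-m} \to A^d \cong \fld$ is perfect, and in particular $\dim A^m = \dim A^{d-m}$. For a graded subspace $V \subseteq A^m$ I write $V^\perp \subseteq A^{d-m}$ for its orthogonal complement under this pairing, so $\dim V + \dim V^\perp = \dim A^{d-m}$ and $(V^\perp)^\perp = V$.

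The one place the hypothesis that $\mathcal{I}$ is an ideal will be used is the identity
\[
   (\ann_{A}\mathcal{I})^m \ =\ (\mathcal{I}^{d-m})^\perp \qquad \text{for every } m .
\]
The inclusion ``$\subseteq$'' is immediate. For ``$\supseteq$'', I would argue by contradiction: suppose $z \in A^m$ satisfies $z\,\mathcal{I}^{d-m}=0$ but $z\,\mathcal{I}^{j}\neq 0$ for some $j$. Since $z\,\mathcal{I}^j \subseteq A^{m+j}$ is nonzero we get $m+j\le d$, and $j=d-m$ is excluded by hypothesis, so $m+j<d$. Pick $w\in\mathcal{I}^{j}$ with $zw\neq 0$; by perfectness of the pairing on $A$ there is $b\in A^{d-m-j}$ with $zwb\neq 0$. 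But $wb\in\mathcal{I}^{d-m}$ because $\mathcal{I}$ is an ideal, contradicting $z\,\mathcal{I}^{d-m}=0$. Combining the identity with $\dim A^m=\dim A^{d-m}$ gives the dimension bookkeeping
\[
   \dim\bigl(A/\ann_{A}\mathcal{I}\bigr)^m \ =\ \dim A^m-\dim(\mathcal{I}^{d-m})^\perp \ =\ \dim\mathcal{I}^{d-m}.
\]

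With this in hand, consider the bilinear pairing $P\colon \mathcal{I}^k\times\mathcal{I}^{d-k}\to\fld$ obtained by restricting the Poincar\'e pairing; its left kernel is $\{x\in\mathcal{I}^k:x\,\mathcal{I}^{d-k}=0\}$, so condition~(3) says exactly that $P$ is nondegenerate on the left. Then $(1)\Leftrightarrow(3)$: the kernel of $\mathcal{I}^k\to(A/\ann_A\mathcal{I})^k$ equals $\mathcal{I}^k\cap(\ann_A\mathcal{I})^k=\mathcal{I}^k\cap(\mathcal{I}^{d-k})^\perp$, which is precisely the left kernel of $P$. And $(2)\Leftrightarrow(3)$: the image of $\mathcal{I}^{d-k}\to(A/\ann_A\mathcal{I})^{d-k}$ has dimension $\operatorname{rank}P$, while the codomain has dimension $\dim\mathcal{I}^k$ by the bookkeeping above, and $\operatorname{rank}P=\dim\mathcal{I}^k-\dim(\text{left kernel of }P)$, so surjectivity is equivalent to $P$ having trivial left kernel. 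Finally $(3)\Leftrightarrow(4)$ is a matter of unwinding definitions: (4) asserts nondegeneracy on the left of the restriction of the Poincar\'e pairing to the relevant (nonface) ideal, and since $\mathcal{I}$ is an ideal the products $\mathcal{I}^k\cdot\mathcal{I}^{d-k}$ lie in $\mathcal{I}^d\subseteq\BR^d(\mu)$, so it is immaterial whether nonvanishing is tested in $\KK^d(\mu,\Ga)$ or in $\BR^d(\mu)$.

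I expect the only genuine obstacle to be the annihilator identity --- concretely, the point that an element annihilating $\mathcal{I}$ in the single complementary degree $d-m$ is forced to annihilate all of $\mathcal{I}$; this is exactly where the ideal structure is indispensable and where Poincar\'e duality of $\BR^\ast(\mu)$ enters, and it also fixes the correct matching of degrees. Everything after that is the standard rank / orthogonal-complement dictionary for perfect bilinear pairings and should require no further idea.
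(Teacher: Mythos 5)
Your proof is correct and carries out the argument that the paper, which labels this as a ``Notice'' and gives no proof, intends the reader to supply: everything reduces to the perfect Poincar\'e pairing on $\BR^\ast(\mu)$ together with the single observation that for an ideal $\mathcal{I}$ the annihilator in degree $m$ coincides with the orthogonal of $\mathcal{I}^{d-m}$, after which the equivalences are routine rank/orthogonal-complement bookkeeping.

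One small remark on precision that costs nothing and is worth keeping in mind: your annihilator identity (and hence the equivalence with (2) via the dimension count) really uses that multiplication propagates down inside the ideal, i.e. $w\in\mathcal{I}^j$, $b\in A^{d-m-j}$ gives $wb\in\mathcal{I}^{d-m}$, which is exactly the ideal hypothesis; you correctly flag this as the one nontrivial point. You should also note, as you implicitly do, that if $\mathcal{I}\neq 0$ then Poincar\'e duality forces $\mathcal{I}^d = A^d \cong \fld$, which is why testing nonvanishing of $xy$ in $\mathcal{I}^d$ versus in $\BR^d(\mu)$ is the same in $(3)\Leftrightarrow(4)$, and why the literal reading of (3) (``for every $x$'') should be read as ``for every nonzero $x$.''
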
	

We obtain immediately an instrumental way to prove biased Poincar\'e duality for monomial ideals. 

\begin{cor}\label{cor:map}
$\KK^\ast(\mu,\varDelta)$ has the biased pairing property  in degree $k$ if and only if
\[\KK^k(\mu,\Ga)\ \longrightarrow\ \BR^k(\mu)_{\mid \overline{\Ga}}\]
is injective, where $\BR^\ast(\mu)_{\mid \overline{\Ga}}$ is the Poinc\'are dual of $\KK^k(\mu,\Ga)$, that is, the quotient of $\BR^k(\mu)$ by elements that pair only trivially with elements of $\KK^k(\mu,\Ga)$.
\end{cor}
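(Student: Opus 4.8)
The statement to prove is Corollary~\ref{cor:map}: that $\KK^\ast(\mu,\varDelta)$ — here written with $\varGamma$ — has the biased pairing property in degree $k$ if and only if the restriction map $\KK^k(\mu,\Ga)\to\BR^k(\mu)_{\mid\overline{\Ga}}$ is injective, where the target is the Poincaré dual of $\KK^k(\mu,\Ga)$.

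The plan is to derive this directly from the equivalence of (1)--(4) in the preceding Proposition, applied to the ideal $\mathcal{I}=\KK^\ast(\mu,\Ga)$. First I would unwind the definitions on the target side: by construction, $\BR^k(\mu)_{\mid\overline{\Ga}}$ is the quotient of $\BR^k(\mu)$ by the submodule of elements pairing trivially with all of $\KK^{d-k}(\mu,\Ga)$ — that is, exactly $\bigl(\ann_{\BR^\ast(\mu)}\KK^\ast(\mu,\Ga)\bigr)^k$, using that $\BR^\ast(\mu)$ is a Poincaré duality algebra so that "pairs trivially with $\KK^{d-k}$" is detected by the perfect pairing $\BR^k\times\BR^{d-k}\to\BR^d\cong\fld$. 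Hence $\BR^k(\mu)_{\mid\overline{\Ga}}=\bigl(\BR^\ast(\mu)/\ann_{\BR^\ast(\mu)}\KK^\ast(\mu,\Ga)\bigr)^k$ as a quotient of $\BR^k(\mu)$, and the map in the Corollary is precisely the degree-$k$ component of the map $\mathcal{I}\to\BR^\ast(\mu)/\ann_{\BR^\ast(\mu)}\mathcal{I}$ appearing in item (1) of the Proposition. Then injectivity of that map in degree $k$ is condition (1), which is equivalent to condition (4), namely that $\mathcal{I}=\KK^\ast(\mu,\Ga)$ has the biased pairing property in degree $k$ — and biased pairing property of the nonface ideal $\KK^\ast(\mu,\Ga)$ is by definition the biased pairing property of $\mu$ with respect to $\Ga$. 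That closes the loop.

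The one point requiring a little care — the main (mild) obstacle — is the identification of the target $\BR^k(\mu)_{\mid\overline{\Ga}}$ with the cokernel-type object $\bigl(\BR^\ast(\mu)/\ann\,\KK^\ast(\mu,\Ga)\bigr)^k$. This is where Poincaré duality of $\BR^\ast(\mu)$ is essential: one must check that an element of $\BR^k(\mu)$ annihilates $\KK^{d-k}(\mu,\Ga)$ under the pairing into $\BR^d\cong\fld$ if and only if it lies in the degree-$k$ part of the annihilator ideal $\ann_{\BR^\ast(\mu)}\KK^\ast(\mu,\Ga)$, i.e.\ multiplies \emph{all} of $\KK^\ast(\mu,\Ga)$ into $0$. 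The forward direction is the content of nondegeneracy of the top pairing (if $x\cdot\KK^{d-k}=0$ in degree $d$, then $x\cdot\KK^j=0$ in every degree $j\le d-k$ because products with further ring elements stay inside $\KK^\ast(\mu,\Ga)$ and can be paired down to degree $d$); the reverse direction is trivial. I would also remark that the "left nondegeneracy" in \eqref{eq:ospd} matches conditions (1) and (3) rather than (2), so that the asymmetry between degrees $k$ and $d-k$ is consistent throughout — this is already built into the statement of the Proposition and needs no separate argument. With these identifications in place the Corollary is just a restatement of (1)$\Leftrightarrow$(4), so no further computation is needed.
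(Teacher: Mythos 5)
Your proposal is correct and takes the same route the paper implicitly takes: the Corollary is read off from the equivalence (1)$\Leftrightarrow$(4) in the preceding Proposition applied to $\mathcal{I}=\KK^\ast(\mu,\Ga)$, and your careful check that ``pairs trivially with $\KK^{d-k}(\mu,\Ga)$ in degree $d$'' is the same as ``lies in $\ann_{\BR^\ast(\mu)}\KK^\ast(\mu,\Ga)$'' supplies precisely the identification the paper treats as immediate.
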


Let us note separately a different condition.

\begin{cor}\label{cor:map2}
This is the case if and only if 
\[\KK^{d-k}(\mu,\Ga)\ \longrightarrow\ \BR^{d-k}(\mu)_{\mid \overline{\Ga}}\]
is surjective.
\end{cor}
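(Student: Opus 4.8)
The plan is to deduce Corollary~\ref{cor:map2} directly from the preceding Proposition together with Corollary~\ref{cor:map}. Recall that Corollary~\ref{cor:map} already identifies the biased pairing property in degree $k$ for $\KK^\ast(\mu,\Ga)$ with the injectivity of the natural map $\KK^k(\mu,\Ga)\to\BR^k(\mu)_{\mid\overline\Ga}$ onto the Poincar\'e dual. So "this is the case" refers to the biased pairing property in degree $k$, and the task is to reformulate that same condition in complementary degree $d-k$.

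First I would set $\mathcal{I}=\KK^\ast(\mu,\Ga)$ in the Proposition and invoke the equivalence of its items (1) and (2): the map $\mathcal{I}\to\BR^\ast(\mu)/\ann_{\BR^\ast(\mu)}\mathcal{I}$ is injective in degree $k$ if and only if it is surjective in degree $d-k$. By the definition used in Corollary~\ref{cor:map}, the quotient $\BR^\ast(\mu)/\ann_{\BR^\ast(\mu)}\mathcal{I}$ is exactly $\BR^\ast(\mu)_{\mid\overline\Ga}$, the quotient of $\BR^\ast(\mu)$ by the elements pairing trivially with all of $\KK^\ast(\mu,\Ga)$; this is a purely notational unwinding, valid because $\BR^\ast(\mu)$ is a Poincar\'e duality algebra, so "annihilator of $\mathcal{I}$" and "pairs trivially with $\mathcal{I}$" coincide. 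Hence item (2) reads precisely as the surjectivity of $\KK^{d-k}(\mu,\Ga)\to\BR^{d-k}(\mu)_{\mid\overline\Ga}$, which is the assertion of Corollary~\ref{cor:map2}.

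Then I would close the loop: item (4) of the Proposition is the biased pairing property in degree $k$, which by Corollary~\ref{cor:map} is equivalent to injectivity of $\KK^k(\mu,\Ga)\to\BR^k(\mu)_{\mid\overline\Ga}$, i.e. item (1); and item (1) is equivalent to item (2) by the Proposition. Chaining these equivalences gives that the biased pairing property in degree $k$ holds if and only if $\KK^{d-k}(\mu,\Ga)\to\BR^{d-k}(\mu)_{\mid\overline\Ga}$ is surjective, as claimed.

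I do not expect a genuine obstacle here; the only point requiring care is the bookkeeping of which "Poincar\'e dual" quotient is meant in degree $k$ versus degree $d-k$ — one must check that $\BR^\ast(\mu)_{\mid\overline\Ga}$ as defined in Corollary~\ref{cor:map} is a graded algebra whose degree-$(d-k)$ piece is the quotient of $\BR^{d-k}(\mu)$ by elements pairing trivially with $\KK^k(\mu,\Ga)$ (equivalently, via nondegeneracy of the ambient pairing, with $\KK^{d-k}(\mu,\Ga)$). This compatibility is exactly what the Proposition's items (1)–(2) encode, so the argument is essentially a translation of the Proposition into the language of Corollary~\ref{cor:map}, and the proof is a short one.
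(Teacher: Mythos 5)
Your argument is correct and is exactly the intended one: Corollary~\ref{cor:map2} is a direct translation of item (2) of the Proposition, just as Corollary~\ref{cor:map} is a translation of item (1), with the key notational observation that $\BR^\ast(\mu)/\ann_{\BR^\ast(\mu)}\KK^\ast(\mu,\Ga)$ is precisely $\BR^\ast(\mu)_{\mid\overline\Ga}$. The paper leaves the proof implicit because it is this immediate chaining of the Proposition's equivalences, and your write-up supplies nothing more and nothing less.
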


\textbf{Biased pairings as "rewriting an element in terms of an ideal":} 
We have several criterions for the biased pairing property: other than the pairing question itself, Corollary~\ref{cor:map} asks whether an element of $\KK^{k}(\mu,\Ga)$ survives rewriting it in its Poincar\'e dual.
Corollary \ref{cor:map2} instead
asks us to rewrite elements  $\BR^{d-k}(\mu)_{\mid \overline{\Ga}}$ by elements in the ideal $\KK^{d-k}(\mu,\Ga)$. 

\subsection{Invariance under subdivisions}

An important tool is the invariance of biased pairing under subdivisions (and their inverses). We only need this for odd-dimensional cycles, but it holds regardless of that restriction, and with respect to the middle pairing, i.e., a manifold of dimension $2k-1$, and regarding the pairing in degree $k$.

Recall that a map of simplicial complexes is simplicial if the image of every simplex lies in a simplex of the image.

A simplicial map $\varphi:\mu' \rightarrow \mu$ of simplicial cycles is a \Defn{combinatorial subdivision} (speak: $\mu'$ is a subdivision of $\mu$) if it is a facewise injective simplicial map of underlying complexes and maps the fundamental class to the fundamental class. We have the following result of Barnette:

\begin{prp}[{Barnette, \cite{Barnette}}]\label{prp:Bar}
Any simplicial $d$-cycle $\mu$ is a subdivision of the boundary $\Delta_{d}$ of the $(d+1)$-dimensional simplex.

Moreover, if $F$ is any facet of $\mu$, and $v$ one of its vertices, then we can assume the combinatorial subdivision maps $F$ to a facet of $\Delta_{d}$, and the star of $v$ to the star of some vertex.
\end{prp}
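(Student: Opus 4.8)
The approach I would take is an induction on $d$. The base case $d=0$ is immediate: a simplicial $0$-cycle is a nonzero multiple of $[a]-[b]$ for two vertices, its support is $\{a,b\}=\Delta_0$, and the identity map is the desired combinatorial subdivision, the two clauses of the ``moreover'' part holding trivially. For the inductive step fix $d\ge1$, a simplicial $d$-cycle $\mu$, a facet $F$ of $|\mu|$, and $v\in F$. Decomposing the fundamental chain as $[\mu]=c_{\ni v}+c_{\not\ni v}$ along the facets through and off $v$, and using that the $v$-summand of $\partial[\mu]$ must vanish, one gets in the usual way that $\Lambda:=\lk_v\mu$ is a simplicial $(d-1)$-cycle with support $\lk_v|\mu|$, that $c_{\ni v}=\pm\,v\ast[\Lambda]$, and that the antistar $A_v$ (the subcomplex of $|\mu|$ spanned by the faces missing $v$) carries a $d$-chain with boundary $\mp[\Lambda]$; moreover $F\setminus v$ is a facet of $\Lambda$, $|\mu|=\St_v\mu\cup A_v$, and $\St_v\mu\cap A_v=|\Lambda|$. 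Apply the inductive hypothesis to $\Lambda$, to the facet $F\setminus v$, and to a chosen vertex $v'\in F\setminus v$: this produces a combinatorial subdivision $\psi\colon\Lambda\to\Delta_{d-1}$ onto the boundary of the $d$-simplex on a vertex set $W$, carrying $F\setminus v$ to a facet and $\St_{v'}\Lambda$ onto a vertex star.

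Now realize $\Delta_d$ on the vertex set $\{0\}\sqcup W$, so that $\Delta_{d-1}=\lk_0\Delta_d$ and the full $d$-simplex on $W$ is the antistar of $0$ in $\Delta_d$. Define $\varphi\colon|\mu|\to\Delta_d$ by $\varphi(v)=0$ and $\varphi|_{|\Lambda|}=\psi$; this already fixes $\varphi$ on $\St_v\mu=v\ast\Lambda$ and maps it onto $\St_0\Delta_d$. It remains to extend $\varphi$ over $A_v$ into the full $d$-simplex on $W$, in agreement with the values already prescribed on $|\Lambda|\subseteq A_v$. Since the target is a full simplex, every vertex assignment automatically yields a simplicial map, so the sole requirement is that $\varphi$ be facewise injective on $A_v$, that is, that the proper $(d+1)$-coloring $\psi$ of $|\Lambda|$ extend to a proper $(d+1)$-coloring of $A_v$. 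Granting this, the conclusion is formal: a face of $|\mu|$ through $v$ lies in $\St_v\mu$, where $\varphi$ restricts to the join of $v\mapsto0$ with the facewise injective $\psi$, and a face off $v$ lies in $A_v$, where $\varphi$ is facewise injective by construction, so $\varphi$ is facewise injective; $\varphi(F)=\{0\}\cup\psi(F\setminus v)$ is a facet of $\Delta_d$ and $\St_v\mu$ maps onto $\St_{\varphi(v)}\Delta_d$; and $\varphi_\ast c_{\ni v}=\pm\,0\ast\psi_\ast[\Lambda]=\pm[\St_0\Delta_d]$ by the inductive degree statement, while $\varphi_\ast c_{\not\ni v}$ lies in the $d$-simplex on $W$ and the relation $\partial\varphi_\ast c_{\not\ni v}=\mp[\Delta_{d-1}]$ pins it down to $\mp$ times the top face of that simplex — so $\varphi_\ast[\mu]$ is a $\pm1$-combination of all $d+2$ facets of $\Delta_d$, hence a cycle equal to $\pm[\Delta_d]$, which equals $[\Delta_d]$ after composing with a transposition of $\Delta_d$ if needed.

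The single substantial point is the coloring-extension step: prolonging a proper $(d+1)$-coloring of the $(d-1)$-cycle $\Lambda=\lk_v\mu$ across the $d$-dimensional antistar $A_v$. The gluing over $\St_v\mu$, the control of $F$ and of the star of $v$, and the degree bookkeeping are purely formal, but this extension is the genuinely combinatorial input — it is where the structure of how a $d$-cycle sits over $\Delta_d$ actually enters — and establishing it is the core of Barnette's argument; here I would either invoke \cite{Barnette} directly or reproduce the local analysis it rests on.
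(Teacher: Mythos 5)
The paper gives no proof of this proposition; it is cited to Barnette and used as a black box, so there is no internal argument to compare against — you are in effect reconstructing Barnette's proof, and the question is whether your reconstruction is sound.

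Your inductive scaffolding is reasonable bookkeeping, and the degree computation and the ``moreover'' clauses do follow formally \emph{once the extension exists}. But the step you single out as ``the single substantial point'' — extending a proper $(d+1)$-coloring of $\Lambda=\lk_v\mu$ across the antistar $A_v$ into the $d$-simplex on $W$ — is not a local analysis that one can expect to simply fill in from \cite{Barnette}: as you have posed it, it is false. Already for $d=1$, take $\mu$ to be the boundary of a square, say $v$--$a$--$x$--$b$--$v$. Here $\Lambda=\{a,b\}$, the inductive $\psi$ must color $a$ and $b$ with the two distinct vertices of $W$, and $A_v$ is the path $a$--$x$--$b$ of even length, which has no proper $2$-coloring assigning distinct colors to its endpoints. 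The obstruction is not a defect of the choice of $v$ either: one checks directly that \emph{no} facewise injective simplicial map from the $4$-cycle (indeed any even cycle) onto $\Delta_1=\partial\Delta^2$ has nonzero degree — with $n$ edges distributed over $3$ target edges, all three multiplicities cannot be odd when $n$ is even — so the extension you need can simply fail. The same parity obstruction, carried by the degree of the boundary coloring, persists in higher dimensions. This means the scheme ``fix $v$, induct on $\lk_v\mu$, extend the coloring over $A_v$'' does not close on its own; whatever Barnette does must involve something more (repositioning the apex, further subdividing, or arguing along a shelling/bistellar decomposition rather than a single vertex star). Deferring the extension step to \cite{Barnette} is therefore not an omitted routine detail but is asking the citation to supply a fact that, in the form you state it, does not hold. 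A smaller point in the same spirit: the base case is also stated too narrowly, since a $0$-cycle need not be supported on exactly two vertices.

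So the proposal does not succeed as a proof: it is an inductive wrapper around the genuinely hard combinatorial content, and the identified reduction target is itself false in general. Since the paper's own ``proof'' is a citation, the honest comparison is that the paper cites the result and you should do the same, rather than present a reduction that does not actually reduce.
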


For geometric simplicial complexes (that is, with respect to an Artinian reduction), we require a \Defn{geometric subdivision} to map the linear span of a simplex to the linear span of the simplex containing it, i.e. if $\sigma'$ is an element of $\mu'$, and $\sigma$ is the minimal face of $\sigma$ containing $\varphi(\sigma')$ combinatorially, then in the geometric realization, we require that the linear spans are mapped to corresponding linear spans, i.e.
\[\mr{span}\ \varphi(\sigma')\ \subseteq\ \mr{span}\ \sigma.\]
In particular, we do not require the image of a face to lie within the combinatorial target geometrically, only that they span the same space.

\begin{lem}\label{lem:PLinv}
	A geometric subdivision $\upvarphi:\mu'\rightarrow \mu$ of simplicial cycles of dimension at least $(2k-1)$ that restricts to the identity on a common subcomplex $\Ga$ preserves biased pairings, that is, 
	$\KK^\ast(\mu,\Ga)$ has the biased pairing property (in degree $k$) if and only if $\KK^\ast(\mu',\Ga)$ does (in degree $k$).	
\end{lem}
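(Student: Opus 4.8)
The plan is to reduce to a single stellar subdivision and then argue locally. Standard subdivision theory (compare Proposition~\ref{prp:Bar}) lets us assume that $\mu$ and $\mu'$ admit a common stellar refinement all of whose elementary moves fix $\Ga$; since the equivalence claimed in Lemma~\ref{lem:PLinv} is transitive, it then suffices to treat the case in which $\mu'$ is the stellar subdivision of $\mu$ at a single face $\si\notin\Ga$, introducing one new vertex $v$ placed in sufficiently general position. Since $\Ga$ is a subcomplex not containing $\si$, no face of $\Ga$ contains $\si$, so the move is confined to the closed star of $\si$. Writing $\Sigma=\St_\si\mu=\si*\lk_\si\mu$, $\Sigma'=v*\partial\si*\lk_\si\mu$, $D=\partial\si*\lk_\si\mu$, and $\Lambda=\{\eta\in\mu:\si\not\subseteq\eta\}$, one has $\mu=\Lambda\cup_D\Sigma$ and $\mu'=\Lambda\cup_D\Sigma'$, where $D$ is a $(d-2)$-cycle and $\Ga\subseteq\Lambda$; the crucial local feature is that $\Sigma$ and $\Sigma'$ are iterated cones, hence have vanishing reduced cohomology.

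The heart of the argument is a Mayer--Vietoris principle for the biased pairing applied to this cover. By the criterion of Corollary~\ref{cor:map}, biased pairing for $\KK^\ast(\mu,\Ga)$ in degree $k$ amounts to injectivity of $\KK^k(\mu,\Ga)\to\BR^k(\mu)_{\mid\overline{\Ga}}$. I would decompose both $\KK^k(\mu,\Ga)$ and its Poincar\'e dual $\BR^k(\mu)_{\mid\overline{\Ga}}$ along $\{\Lambda,\Sigma\}$, note that the $\Sigma$-contributions to each vanish in degrees $k$ and $d-k$ because $\Sigma$ is a cone---this is the sole use of the hypothesis $\dim\mu\ge 2k-1$, which forces $1\le k\le d-k\le d-1$ and thus puts both relevant degrees in the range where the reduced cohomology of a cone vanishes---and conclude that the injectivity in question is equivalent to a condition formulated entirely in terms of the triple $(\Lambda,D,\Ga)$. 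Since $\mu$ and $\mu'$ have the same $\Lambda$, $D$ and $\Ga$, their biased pairing properties coincide. One direction also admits a shortcut: the geometric hypothesis supplies a pullback $\upvarphi^\ast$ compatible with the linear systems and the degree maps, inducing on Gorensteinifications an injection $\BR^\ast(\mu)\hookrightarrow\BR^\ast(\mu')$ that carries $\KK^\ast(\mu,\Ga)$ into $\KK^\ast(\mu',\Ga)$, together with an adjoint transfer $\upvarphi_!$ with $\upvarphi_!\upvarphi^\ast=\mathrm{id}$ and the projection formula $\upvarphi_!(\upvarphi^\ast(a)\,b)=a\,\upvarphi_!(b)$; provided $\upvarphi_!$ respects the ideals, biased pairing for $\mu'$ then pushes down to $\mu$ at once, since any $0\neq x\in\KK^k(\mu,\Ga)$ has $\upvarphi^\ast(x)$ pairing nontrivially with some $y'\in\KK^{d-k}(\mu',\Ga)$, whence $x$ pairs nontrivially with $\upvarphi_!(y')$.

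The main obstacle is making the Mayer--Vietoris step precise. Unlike its shadow for ordinary cohomology, an excision argument for the biased \emph{pairing} must keep simultaneous and compatible track of the annihilator ideals cutting out $\KK^\ast(\mu,\Ga)$ and $\BR^\ast(\mu)_{\mid\overline{\Ga}}$, and of the Artinian reductions on the two sides of the gluing along $D$---whose $h$-vectors genuinely differ---and must verify that every discrepancy between $\mu$ and $\mu'$ is absorbed by the contractible pieces $\Sigma$ and $\Sigma'$, hence cancels. This is exactly where the \emph{geometric}, rather than merely combinatorial, nature of the subdivision is indispensable: without the compatibility of the linear systems neither $\upvarphi^\ast$ nor the excision comparison of Artinian reductions would even be defined. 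A secondary technical point, needed only for the shortcut, is to confirm that $\upvarphi^\ast$ descends to the Gorensteinifications and that $\upvarphi_!$ sends $\KK^\ast(\mu',\Ga)$ into $\KK^\ast(\mu,\Ga)$---both of which likewise follow from the local cone structure once the bookkeeping is set up.
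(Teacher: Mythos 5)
Your proposal has a genuine gap, and it takes a markedly more roundabout route than the paper. The core ``Mayer--Vietoris'' step --- decomposing $\KK^k(\mu,\Ga)$ and its Poincar\'e dual along $\{\Lambda,\Sigma\}$ and absorbing the discrepancy into the contractible pieces --- is explicitly left unfinished, and it is far from clear it can be made precise: biased pairing is a global nondegeneracy condition on a Poincar\'e duality algebra, not a sheaf-theoretic invariant, and ``excising'' a star while keeping coherent track of the annihilator ideals and the two different Artinian reductions is exactly the kind of bookkeeping that defeats naive excision arguments. Separately, the reduction to a chain of elementary stellar moves fixing $\Ga$ is not what Proposition~\ref{prp:Bar} (Barnette) gives you; you would need an Alexander-type common-stellar-refinement theorem relative to $\Ga$, which is neither quoted nor standard in the form you use it.

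The paper's proof is much shorter and avoids all of this. It observes that a geometric subdivision induces a pullback $\upvarphi^\ast\colon \BR^\ast(\mu)\hookrightarrow\BR^\ast(\mu')$ compatible with the Poincar\'e pairing, giving an \emph{orthogonal} decomposition $\BR^k(\mu')=\BR^k(\mu)\oplus G$ with $G$ the Gysin image. The crucial point is that, because $\upvarphi$ is the identity on $\Ga$, the ideal decomposes compatibly: $\KK^k(\mu',\Ga)=\KK^k(\mu,\Ga)\oplus G$. Since $G$ and its dual degree pair nondegenerately among themselves (as orthogonal complements in a Poincar\'e algebra), the annihilator of $\KK^{d-k}(\mu',\Ga)$ coincides with that of $\KK^{d-k}(\mu,\Ga)$, and the map $\KK^k(\mu',\Ga)\to\BR^k(\mu')_{\mid\overline{\Ga}}$ block-diagonalizes, with the $G$-block automatically an isomorphism. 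This handles \emph{both} directions of the equivalence at once, with no stellar reduction and no local excision. Your ``shortcut'' is on the right track: your $\upvarphi_!$ is precisely the orthogonal projection onto $\BR^\ast(\mu)$, and the point you hedge on --- ``provided $\upvarphi_!$ respects the ideals'' --- is exactly the decomposition $\KK^{d-k}(\mu',\Ga)=\KK^{d-k}(\mu,\Ga)\oplus G^{d-k}$, which must be verified (it follows because $\Ga$ is fixed) and which then also gives the converse direction, making your Mayer--Vietoris step unnecessary.
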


\begin{proof}
The map induces a pullback 
\[\varphi^\ast: \AR^\ast(|\mu |)\ \longhookrightarrow\ \AR^\ast(|\mu' |),\]
that is compatible with the Poincar\'e pairing, so it induces a map 
\[\varphi^\ast: \BR^\ast(\mu)\ \longhookrightarrow\ \BR^\ast(\mu').\]
Let us denote by $G$ the orthogonal complement to the image in $\BR^k(\mu')$, the image of the Gysin, so that the decomposition
\[\BR^k(\mu)\ \oplus\ G\ =\ \BR^k(\mu')\]
is orthogonal. Notice then that $\varphi^\ast$ is the identity on $\AR^\ast(|\mu |)$, and its image in $\BR^\ast(\mu )$, so
\[\KK^k(\mu,\Ga)\ \oplus\ G\ =\ \KK^k(\mu',\Ga)\]
Hence, it induces an isomorphism on the orthogonal complements of $\KK^\ast(\mu,\Ga)$ resp.\ $\KK^\ast(\mu',\Ga)$.
\end{proof}

Hence, if we do not subdivide $\Ga$, then the biased pairing property is preserved. If one subdivides $\Ga$, then only one direction holds.
\begin{prp}\label{prp:PLinv}
	Consider a geometric subdivision $\upvarphi:\mu'\rightarrow \mu$ of simplicial cycles of dimension at least $(2k-1)$, and $\Ga$ a subcomplex that is subdivided to $\Ga'$.
	
Then  $\KK^\ast(\mu,\Ga)$ satisfies biased Poincar\'e duality (in degree $k$) if $\KK^\ast(\mu',\Ga')$ does (in degree $k$).	
\end{prp}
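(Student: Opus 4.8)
The plan is to rerun the argument of Lemma~\ref{lem:PLinv}, accounting for the fact that $\upvarphi$ now fails to restrict to the identity on $\Ga$, so that the orthogonal splitting used there survives only one-sidedly and must be compensated by a transfer map. As in Lemma~\ref{lem:PLinv}, the geometric subdivision induces an isometric pullback $\upvarphi^\ast\colon\BR^\ast(\mu)\hookrightarrow\BR^\ast(\mu')$ for the Poincar\'e pairings. Since $\BR^\ast(\mu)$ is a Poincar\'e duality algebra its image is a nondegenerate subspace, so $\BR^\ast(\mu')=\upvarphi^\ast\BR^\ast(\mu)\oplus G$ orthogonally in every pair of complementary degrees, and I let $\upvarphi_\ast\colon\BR^\ast(\mu')\to\BR^\ast(\mu)$ be orthogonal projection onto $\upvarphi^\ast\BR^\ast(\mu)$ followed by $(\upvarphi^\ast)^{-1}$. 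This transfer is the adjoint of $\upvarphi^\ast$, so $\deg\bigl(\upvarphi_\ast(\alpha)\cdot\beta\bigr)=\deg\bigl(\alpha\cdot\upvarphi^\ast(\beta)\bigr)$, $\upvarphi_\ast\circ\upvarphi^\ast=\mathrm{id}$, and the projection formula $\upvarphi_\ast\bigl(\upvarphi^\ast(b)\,\alpha\bigr)=b\,\upvarphi_\ast(\alpha)$ holds.

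Next I would record the two compatibilities with the relative structure that I need, namely $\upvarphi^\ast\bigl(\KK^\ast(\mu,\Ga)\bigr)\subseteq\KK^\ast(\mu',\Ga')$ and $\upvarphi_\ast\bigl(\KK^\ast(\mu',\Ga')\bigr)\subseteq\KK^\ast(\mu,\Ga)$. The first says $\upvarphi^\ast$ is compatible with passage to $\Ga$: the quotient $\BR^\ast(\mu)/\KK^\ast(\mu,\Ga)$ is a quotient of the Artinian reduction of $\Ga$, $\upvarphi$ restricts to a geometric subdivision $\Ga'\to\Ga$, and the pullback construction is natural enough that the evident square commutes, so $\upvarphi^\ast$ carries $\KK^\ast(\mu,\Ga)$ into $\KK^\ast(\mu',\Ga')$; equivalently, on squarefree generators $\upvarphi^\ast(\x_\sigma)$ is supported on faces of $|\mu'|$ with carrier $\sigma$, and $\Ga'$ is precisely the subcomplex of faces of $|\mu'|$ whose carrier lies in $\Ga$. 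The second compatibility is, by Poincar\'e duality in $\BR^\ast(\mu)$ and $\BR^\ast(\mu')$ together with the adjunction of $\upvarphi^\ast$ and $\upvarphi_\ast$, equivalent to $\upvarphi^\ast\bigl(\ann_{\BR^\ast(\mu)}\KK^\ast(\mu,\Ga)\bigr)\subseteq\ann_{\BR^\ast(\mu')}\KK^\ast(\mu',\Ga')$, and this in turn follows from the carrier-compatibility of $\upvarphi^\ast$: the annihilator of a nonface ideal is controlled by stars (in the Cohen--Macaulay case it is generated by the socle together with the classes $\x_\eta$ with $\St_\eta|\mu|\subseteq\Ga$, and in general by the nonface ideal of the appropriate Alexander-dual relative complex), and a face of $|\mu'|$ whose star maps by carriers into $\St_\eta|\mu|\subseteq\Ga$ already lies in $\Ga'$, so $\upvarphi^\ast(\x_\eta)$ annihilates every generator $\x_{\sigma'}$, $\sigma'\notin\Ga'$, of $\KK^\ast(\mu',\Ga')$.

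With both compatibilities in hand I would conclude using the characterization of the biased pairing property recalled above (the proposition preceding Corollary~\ref{cor:map}) in the form: $\KK^\ast(\mu,\Ga)$ has biased Poincar\'e duality in degree $k$ if and only if every nonzero $x\in\KK^k(\mu,\Ga)$ satisfies $x\cdot y\neq 0$ for some $y\in\KK^{d-k}(\mu,\Ga)$. Given nonzero $x\in\KK^k(\mu,\Ga)$, the class $\upvarphi^\ast x\in\KK^k(\mu',\Ga')$ is nonzero since $\upvarphi^\ast$ is injective, so the hypothesis that $\KK^\ast(\mu',\Ga')$ has biased Poincar\'e duality in degree $k$ gives $y'\in\KK^{d-k}(\mu',\Ga')$ with $\upvarphi^\ast x\cdot y'\neq 0$. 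Setting $y:=\upvarphi_\ast y'$, the second compatibility gives $y\in\KK^{d-k}(\mu,\Ga)$, while adjunction gives $\deg(x\cdot y)=\deg(\upvarphi^\ast x\cdot y')\neq 0$; hence $x\cdot y\neq 0$, and $\KK^\ast(\mu,\Ga)$ has biased Poincar\'e duality in degree $k$.

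The main obstacle is the second compatibility $\upvarphi_\ast\bigl(\KK^\ast(\mu',\Ga')\bigr)\subseteq\KK^\ast(\mu,\Ga)$, equivalently $\upvarphi^\ast$ preserving the annihilators of the nonface ideals: in Lemma~\ref{lem:PLinv} this was automatic because $\Ga$ was untouched and the orthogonal complement $G$ sat entirely inside the relative ideal, whereas here the new directions created by the subdivision partly lie over $\Ga$, so one must genuinely verify that the transfer of a class supported off $\Ga'$ stays supported off $\Ga$; this is exactly where the combinatorics of carriers, and an Alexander-duality description of annihilators of nonface ideals as set up in biased pairing theory, is really used. Everything else is a formal transport through the isometric pair $(\upvarphi^\ast,\upvarphi_\ast)$, and the dimension hypothesis $\dim\mu\ge 2k-1$ enters only through $k\le d/2$, i.e.\ to place us in the range where ``biased Poincar\'e duality in degree $k$'' is the relevant notion.
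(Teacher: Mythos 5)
Your proof is correct and follows essentially the same route as the paper's: the paper simply asserts that the orthogonal splitting $\BR^k(\mu)\oplus G = \BR^k(\mu')$ restricts to $\KK^k(\mu,\Ga)\oplus G' = \KK^k(\mu',\Ga')$ with $G'\subseteq G$ (which is exactly the content of your two compatibilities for $\upvarphi^\ast$ and $\upvarphi_\ast$) and then concludes by compatibility of the pullback with the pairing. You have merely spelled out the adjoint transfer and supplied a carrier/Alexander-duality justification for the compatibilities that the paper leaves implicit.
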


The other direction is no longer true, as is easy to see.

\begin{proof}
It is still true that \[\BR^k(\mu)\ \oplus\ G\ =\ \BR^k(\mu')\]
in an orthogonal splitting under the Poincar\'e pairing. However, this is no longer so nice when restricting to the ideal, as we can only conclude that
\[\KK^k(\mu,\Ga)\ \oplus\ G'\ =\ \KK^k(\mu',\Ga')\]
where $G'$ is some subspace of $G$. However, as the pullback map is compatible with the pairing, the one direction we claimed still holds.
\end{proof}

%
%
%
%

\subsection{Hall-Laman relations and the suspension trick}

Finally, biased Poincar\'e duality allows us to formulate a Lefschetz property at ideals. We say that $\BR^\ast(\mu)$, with socle degree $d$, satisfies the \Defn{Hall-Laman relations} in degree $k\le \frac{d}{2}$ and with respect to an ideal $\mathcal{I}^\ast\subset \BR^\ast(\mu)$ if there exists an $\ell$ in $\BR^1(\mu)$, such that the pairing
	\begin{equation}\label{eq:sl}
	\begin{array}{ccccc}
	\mathcal{I}^k& \times &\mathcal{I}^k & \longrightarrow &\ \mathcal{I}^d\cong \R \\
	a	&		& b& {{\xmapsto{\ \ \ \ }}} &\ \mr{deg}(ab\ell^{d-2k})
	\end{array}
	\end{equation}
	is nondegenerate. Note that the Hall-Laman relations coincide with the biased pairing property if $k=\frac{d}{2}$.
	
	If we want to prove the Hall-Laman relations for a pair $(\mu, \Ga)$, where $\Ga$ is a subcomplex of $| \mu |$, specifically the Hall-Laman relations for
$\KK^\ast(\mu, \Ga)$ or its annihilator, we proceed using the following trick. Consider the suspension $\susp \De$ of a simplicial complex $ \De$. Label the two vertices of the suspension $\mbf{n}$ and $\mbf{s}$ (for north and south). Let $\uppi$ denote the projection along $\mbf{n}$, and let $\tet$ denote the height over that projection, and let $A\ast B$ denote the free join of two simplicial complexes $A$ and $B$.

	\begin{lem}[{\cite[Lemma 7.5]{AHL}}]\label{lem:midred}
Considering $\susp\mu$ realized in $\R^{d+1}$, and $k< \frac{d}{2}$, the following two are equivalent:
		\begin{compactenum}[(1)]
					\item The Hall-Laman relations for \[\KK^{k+1}(\susp \mu,(\susp \Ga )\cup \mbf{s} \ast |\mu|)	\]
					with respect to $x_{\mbf{n}}$.									
			\item The Hall-Laman relations for 
 \[\KK^{k}(\uppi\hspace{0.3mm}\mu,\uppi\hspace{0.3mm} \Ga)\]			
with respect to $\tet$. 
		\end{compactenum}
	\end{lem}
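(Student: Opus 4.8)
The plan is to show that the two Hall-Laman conditions are controlled by the \emph{same} bilinear form, so that one pairing is nondegenerate exactly when the other is; the key is that everything relevant lives inside the principal ideal generated by $x_{\mbf{n}}$. First I would record that the relative complex $(\susp\Ga)\cup\mbf{s}\ast|\mu|$ is engineered precisely so that $\KK^{k+1}(\susp\mu,(\susp\Ga)\cup\mbf{s}\ast|\mu|)\subseteq(x_{\mbf{n}})\subset\BR^\ast(\susp\mu)$: the facets of $\susp\mu$ are the joins $\mbf{n}\ast F$ and $\mbf{s}\ast F$ over the facets $F$ of $\mu$, so a face of $\susp\mu$ fails to lie in $\mbf{s}\ast|\mu|$ only if it contains $\mbf{n}$, whence every squarefree generator of this ideal is divisible by $x_{\mbf{n}}$. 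Since $x_{\mbf{n}}x_{\mbf{s}}=0$, multiplication by $x_{\mbf{n}}$ on $\BR^\ast(\susp\mu)$ factors through the star $\St_{\mbf{n}}\susp\mu=\mbf{n}\ast|\mu|$, a cone; solving for $x_{\mbf{n}}$ by means of one of the $d+1$ parameters of $\susp\mu$ then gives the standard link identification
\[ \psi\colon\ (x_{\mbf{n}})\ \xrightarrow{\ \sim\ }\ \BR^{\ast-1}(\uppi\mu), \]
under which, for the induced realization of $\uppi\mu$, multiplication by $x_{\mbf{n}}$ corresponds to multiplication by $\tet$ and the remaining $d$ parameters become the coordinate forms of $\uppi\mu$. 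Finally, a link computation shows $\lk_{\mbf{n}}\big((\susp\Ga)\cup\mbf{s}\ast|\mu|\big)=\Ga$, so $\psi$ restricts to a linear isomorphism $\KK^{k+1}(\susp\mu,(\susp\Ga)\cup\mbf{s}\ast|\mu|)\xrightarrow{\ \sim\ }\KK^{k}(\uppi\mu,\uppi\Ga)$.

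It then remains to compare the two degree maps through $\psi$. On a top face $\mbf{n}\ast F$ of $\susp\mu$ the oriented coefficient of $\susp\mu$ equals $\pm\mu_F$, and a cofactor expansion of the minor $\V_{|\mbf{n}\ast F}$ along the row of $\mbf{n}$, combined with the relation expressing $x_{\mbf{n}}$ through the eliminated parameter, yields a single nonzero scalar $c$ with $\deg_{\susp\mu}=c\cdot(\deg_{\uppi\mu}\circ\psi)$ on the socle $(x_{\mbf{n}})^{d+1}$. Using this together with the relation $\psi(x_{\mbf{n}}w)=\tet\,\psi(w)$, and writing $\tilde a=\psi(a)$, $\tilde b=\psi(b)$ for $a,b\in\KK^{k+1}(\susp\mu,(\susp\Ga)\cup\mbf{s}\ast|\mu|)$, a direct computation gives (the exponents being nonnegative because $k<\tfrac d2$)
\[ \deg_{\susp\mu}\!\big(a\,b\,x_{\mbf{n}}^{\,d-1-2k}\big)\ =\ c\cdot\deg_{\uppi\mu}\!\big(\tilde a\,\tilde b\,\tet^{\,d-2k}\big). \]
Hence the Hall-Laman pairing \eqref{eq:sl} for $\KK^{k+1}(\susp\mu,(\susp\Ga)\cup\mbf{s}\ast|\mu|)$ with respect to $x_{\mbf{n}}$ is carried by the linear isomorphism $\psi$ onto a nonzero multiple of the Hall-Laman pairing for $\KK^{k}(\uppi\mu,\uppi\Ga)$ with respect to $\tet$, which is exactly the asserted equivalence. (When $d$ is odd and $k=\tfrac{d-1}{2}$ the power $d-1-2k$ vanishes and both conditions degenerate to the biased pairing property in the middle degree, to which the same argument applies unchanged.)

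The step I expect to be the main obstacle is the bookkeeping needed to make $\psi$ legitimate on the \emph{Gorenstein quotients}, not merely on the Artinian reductions: one must check that the annihilator ideal defining $\BR^\ast(\susp\mu)$ restricts correctly along the star of $\mbf{n}$ to the one defining $\BR^\ast(\uppi\mu)$, and here the identity $|\susp\mu|=\susp|\mu|$ together with the degree compatibility $\deg_{\susp\mu}=c\cdot(\deg_{\uppi\mu}\circ\psi)$ on the socle is what makes it go through. A secondary point is to fix the realization of $\susp\mu$ so that the parameter elimination is permissible and $\tet$ is \emph{exactly} the image of $x_{\mbf{n}}$ under $\psi$; following the convention that $\uppi\mu$ and $\tet$ denote the projected realization and its height over the projection, this is only a matter of genericity and normalization and costs no generality, after which the determinant cofactor expansion is routine.
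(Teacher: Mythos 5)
Your proof is correct and essentially takes the paper's route, just with the details made explicit. The paper's proof is the commutative diagram obtained by setting $\tet=x_{\mbf{n}}-x_{\mbf{s}}$ (so $x_{\mbf{n}}$ and $\tet$ agree on $(x_{\mbf{n}})$), identifying $\BR^k(\uppi\mu)\cong\BR^{k+1}(\susp\mu,\mbf{s}\ast|\mu|)=(x_{\mbf{n}})^{k+1}$ via the north-pole link map, and then noting that the horizontal maps $\cdot\tet^{d-2k}$ and $\cdot x_{\mbf{n}}^{d-2k-1}$ correspond and restrict compatibly to ideals; your $\psi$, the computation $\lk_{\mbf{n}}((\susp\Ga)\cup\mbf{s}\ast|\mu|)=\Ga$, the degree-map compatibility via cofactor expansion, and the resulting pairing identity are exactly the unpacked content of that diagram and its restriction to $\KK$.
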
	

\begin{proof}
	Set $\tet=x_{\mbf{n}}-x_{\mbf{s}}$ in  $\BR^{\ast}(\susp |\mu|)$.
Consider then the diagram
\[\begin{tikzcd}[column sep=5em]
 \BR^{k}(\uppi \mu ) \arrow{r}{\ \ \ \ \cdot \tet^{d-2k}\ \ \ \  } \arrow{d}{\sim } & \BR^{d-k}(\uppi \mu ) \arrow{d}{\sim } \\
\BR^{k+1}(\susp \mu ,\mbf{s}\ast \vert \mu \vert ) \arrow{r}{\ \ \ \cdot x_{\mbf{n}}^{d-2k-1}\ \ \ } & \BR^{d-k}(\susp \mu)_{\mid \overline{\mbf{s}\ast \vert \mu \vert}}
\end{tikzcd}
\]
An isomorphism on the top is then equivalent to an isomorphism of the bottom map, and the same holds when restricting to ideals and their Poincar\'e duals.
\end{proof}

\subsection{Lefschetz elements via the perturbation lemma}

Let us note we can use another way to construct Lefschetz elements.
For this, let us remember a Kronecker lemma of \cite{AHL}, see also \cite{Ringel} and of course Kronecker for the original formulation \cite{Kronecker}:

\begin{lem}\label{lem:perturbation}
		Consider two linear maps 
		\[\upalpha, \upbeta: \mathcal{X}\ \longrightarrow\ \mathcal{Y}\]
		of two vector spaces $\mathcal{X}$ and $\mathcal{Y}$ over $\R$.
 Assume that $\upbeta$ has image transversal to the image of $\upalpha$, that is,
			\[\upbeta(\ker\upalpha)\ \cap\ \im\upalpha\ =\ {0}\ \subset\ \mathcal{Y} .\]
			Then a generic linear combination 
			$\upalpha\ ``{+}"\ \upbeta$ of $\upalpha$ and $\upbeta$
			has kernel 
			\[\ker (
			\upalpha\ ``{+}"\ \upbeta)\ = \ \ker \upalpha\ \cap\ \ker \upbeta.\]
\end{lem}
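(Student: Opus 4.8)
The plan is to establish the two inclusions $\ker\upalpha\cap\ker\upbeta\subseteq\ker(\upalpha\ ``{+}"\ \upbeta)\subseteq\ker\upalpha\cap\ker\upbeta$; the first is trivial and holds for \emph{every} linear combination, while the second holds only generically. Up to an irrelevant overall scalar, a generic combination may be written $\upalpha+t\upbeta$, so it suffices to show $\ker(\upalpha+t\upbeta)=\ker\upalpha\cap\ker\upbeta$ for all but finitely many $t$ in our (infinite) ground field. I would first reduce to the case $\ker\upalpha\cap\ker\upbeta=0$ by passing to $\overline{\mathcal{X}}=\mathcal{X}/(\ker\upalpha\cap\ker\upbeta)$: both maps descend to $\overline{\upalpha},\overline{\upbeta}\colon\overline{\mathcal{X}}\to\mathcal{Y}$, one checks that $\im\overline{\upalpha}=\im\upalpha$, that $\overline{\upbeta}(\ker\overline{\upalpha})=\upbeta(\ker\upalpha)$, and that $\ker\overline{\upalpha}\cap\ker\overline{\upbeta}=0$, so the transversality hypothesis is inherited; moreover $\ker(\overline{\upalpha}+t\overline{\upbeta})$ is exactly the image of $\ker(\upalpha+t\upbeta)$, so it is harmless to assume $\ker\upalpha\cap\ker\upbeta=0$ and prove that $\upalpha+t\upbeta$ is injective for generic $t$.

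The heart of the argument is to prove injectivity over the rational function field, i.e. that $\upalpha+t\upbeta\colon\mathcal{X}\otimes\fld(t)\to\mathcal{Y}\otimes\fld(t)$ has trivial kernel. If $v$ were a nonzero kernel vector, then after clearing denominators and dividing by the $\gcd$ of its coordinates I may assume $v=v_0+v_1t+\cdots+v_mt^m$ with $v_i\in\mathcal{X}$, $v_m\neq0$, and $v$ primitive over $\fld[t]$. Comparing coefficients of powers of $t$ in $(\upalpha+t\upbeta)v=0$ yields $\upalpha v_0=0$, $\upbeta v_m=0$, and $\upalpha v_j+\upbeta v_{j-1}=0$ for $1\le j\le m$. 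Now $v_0\in\ker\upalpha$, so $\upbeta v_0\in\upbeta(\ker\upalpha)$; on the other hand $\upbeta v_0=-\upalpha v_1\in\im\upalpha$ when $m\ge1$ (and $\upbeta v_0=0$ outright when $m=0$), so transversality forces $\upbeta v_0=0$, whence $v_0\in\ker\upalpha\cap\ker\upbeta=0$. But then $v=t(v_1+v_2t+\cdots+v_mt^{m-1})$, contradicting primitivity. Hence $\ker(\upalpha+t\upbeta)=0$ over $\fld(t)$.

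Finally I would descend from $\fld(t)$ to generic scalars: injectivity over $\fld(t)$ means that the matrix of $\upalpha+t\upbeta$ has a maximal square submatrix whose determinant, a polynomial in $t$, is not identically zero; for every $t_0$ outside its finite zero set the specialized submatrix is invertible, so $\upalpha+t_0\upbeta$ is injective there. Unwinding the quotient gives $\ker(\upalpha+t_0\upbeta)=\ker\upalpha\cap\ker\upbeta$ for all such $t_0$, which is the assertion. (Over $\R$ one may alternatively invoke lower semicontinuity of rank to go from one value to the generic one.) The step I expect to be the true obstacle is not any computation but the recognition that choosing ``one generic $t$'' by itself proves nothing, and that the transversality hypothesis must be used precisely at the bottom of the tower of coefficient relations: it is exactly what annihilates $v_0$ and thereby triggers the primitivity contradiction; everything else is degree bookkeeping and a routine specialization.
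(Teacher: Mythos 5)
Your proof is correct. Since the paper itself does not prove this lemma but defers to \cite{AHL}, Ringel, and Kronecker, there is no in-text argument to compare against; the route you take (quotient by $\ker\upalpha\cap\ker\upbeta$ to reduce to the injective case, pass to $\fld(t)$, clear denominators to obtain a primitive polynomial kernel vector, compare coefficients to force $v_0\in\ker\upalpha\cap\ker\upbeta$ via transversality, and then specialize by a nonvanishing determinant) is the standard one and matches the argument given in \cite{AHL} in substance. Two small remarks: the paper states the lemma over $\R$ but uses it over general infinite fields, and your proof works in exactly that generality since the specialization step only needs the ground field to be infinite; and your observation that the key point is that choosing a single generic $t$ proves nothing by itself, and that transversality must be invoked at the bottom of the coefficient tower, is exactly the crux.
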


As observed in \cite[Section 6.6]{AHL}, this can be used to iteratively prove the existence of Lefschetz elements provided that $\BR(\Sg)$ satisfies the biased pairing property in the pullback to any vertex link. Let us consider for simplicity the case of $\mu$ a cycle of dimension $2k-1$.

The connection is to the classical Hall matching theorem, which constructs stable matchings in a discrete setting \cite{Hall}. This lemma is designed to do the same in the setting of linear maps. The idea is now to prove the following \Defn{transversal prime property}: for $W$ a set of vertices in $| \mu |$ if 
\[\mathrm{ker}\ ``{\sum_{v\in W}}"\ x_v\ =\ \bigcap_{v\in W} \mathrm{ker}\ x_v\]

Note: proving the transversal prime property for all vertices together is equivalent to the Lefschetz isomorphism
\[ \mathcal{X}\ =\ \BR^{k}(\mu) \ \xrightarrow{\ \cdot \ell\ }\ \mathcal{Y}\ =\  \BR^{k+1}(\mu)\] 
for $\ell$ the generic linear combination over all variables. This is because  \[\bigcap_{v \text{ vertex of } \varSigma} \mathrm{ker}\ x_v \ =\ 0\] because of Poincar\'e duality.

Note further, to see how the biased pairing property implies the transversal property by induction on the size of the set $W$, that when we try to apply the criterion by multiplying with a new variable $x_v$, adding a vertex $v$ to the set $W$, then we are really pulling back to a principal ideal ideal $\langle x_v\rangle$ in $\BR(\mu)$, and asked to prove that 
$x_v \mathrm{ker}\ ``{\sum_{v\in W}}"$ and $\mathrm{im}\ ``{\sum_{v\in W}}"\cap \langle x_v\rangle$ intersect only in $0$. 

Note finally that both spaces are orthogonal complements. This is the case if and only if the Poincar\'e pairing is perfect when restricted to either (or equivalently both) of them. 

\section{Some useful identities on residues and degrees}\label{sec:diff}

We now prove and recall some useful identities on the degree.

\subsection{The square of a monomial}

Now that we know that we can restrict to minimal cycles of odd dimension $2k-1=d-1$, and biased pairings in them, we can go a little further. The trick is to consider the degree of an element in $\BR^k(\varDelta)$ of a pseudomanifold of dimension $d-1$ as a function of $\V$, which we think of as independent variables. Let us start with a formula due to Lee that describes the coefficients of the fundamental class:

\begin{lem}[{\cite[Theorem~11]{Lee}}]\label{lee:formula}
We have
\[\deg (\mathbf{x}_\tau^2) (\V)\ =\ \sum_{F \text{ facet containing } \tau } \deg(\x_{F}) \left(\prod_{i\in \tau} [F-i] \right)\cdot\left(\prod_{i\in F\setminus \tau} [F-i]^{-1}\right) \]
where $[F-i]$ is the volume element of $F-i$.
\end{lem}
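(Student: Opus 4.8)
The plan is to compute $\deg(\mathbf{x}_\tau^2)(\V)$ by a direct residue/partial-fraction argument in the polynomial ring, exploiting the fact that the Artinian reduction $\AR^\ast(\varDelta)$ is obtained by killing a linear system of parameters $\Theta$ whose coefficient matrix is $\V$. First I would fix a facet-free presentation: since $\BR^\ast(\mu)$ is a Poincaré duality algebra generated in degree one by squarefree monomials, the degree map is determined by its values $\deg(\x_F) = \mu_F/|\V_{|F}|$ on the facets $F$ (cardinality $d$), as recorded in the Convention. So the content of the formula is purely an identity expressing how $\mathbf{x}_\tau^2$ rewrites, modulo the linear system $\Theta$ and the non-face relations, as a $\fld(\V)$-combination of the facet monomials $\x_F$ with $\tau\subseteq F$.

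The key step is the rewriting of $\x_\tau^2$ in $\AR^\ast(\varDelta)$. Write $\tau=\{v_1,\dots,v_j\}$ with $|\tau|=j$, so the target degree is $d$ means $\x_\tau^2$ has degree $2j$; actually one first reduces to the case where we multiply $\x_\tau$ by enough degree-one elements to land in degree $d$, but the cleanest route is Lee's: use the linear system of parameters to express each $x_{v_i}$ (one of the repeated variables) in terms of the other variables, iterating until the monomial is supported on faces properly containing $\tau$, and tracking the coefficients as ratios of minors of $\V$. Concretely, modulo $\Theta$ one has relations $\sum_w \V_{s,w} x_w = 0$ for each row $s$ of $\V$; solving a suitable $j\times j$ subsystem expresses the "doubled" part of $\x_\tau$ against the link of $\tau$. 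Each facet $F\supseteq\tau$ contributes its monomial $\x_F$ with coefficient a signed ratio of determinants, and Cramer's rule together with the multiplicativity of the volume elements $[F-i]$ (the determinants of $\V$ restricted to $F$ with vertex $i$ deleted) produces exactly the product $\prod_{i\in\tau}[F-i] \cdot \prod_{i\in F\setminus\tau}[F-i]^{-1}$. Then applying $\deg$ to $\x_F$ and summing over facets $F$ containing $\tau$ gives the claimed formula.

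The main obstacle I expect is bookkeeping the signs and the normalization of the volume elements $[F-i]$ so that everything is consistent with the orientation conventions built into $\mu_F$ and $\mathrm{sgn}(F)$ — in particular making sure the $j$-fold iteration of "solve one linear relation" composes to the stated symmetric product over $i\in\tau$ rather than some order-dependent expression, and that the sign picked up at each step cancels against the orientation signs in $\deg(\x_F)$. A secondary point to be careful about is the reduction to squarefree generators in arbitrary characteristic: as the footnote in the paper notes, Lee's argument is written in characteristic zero, so one should check that no step divides by an integer that could vanish — but since all denominators are minors of $\V$ (which are nonzero in the transcendental extension $\fld(\V)$, where $\V$ has algebraically independent entries), this is harmless, and the identity, once established over $\fld(\V)$, is an identity of rational functions and hence holds in the stated generality. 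I would therefore first prove it over $\fld(\V)$ and then specialize.
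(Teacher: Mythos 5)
The paper does not supply its own proof of this lemma; it is cited verbatim from Lee's Theorem~11, and the only in-paper commentary is the footnote addressing why the argument survives the passage out of characteristic zero. So there is no in-paper argument to compare against, and what you are offering is a reconstruction of Lee's derivation rather than a rederivation of something the paper proves.

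As a reconstruction, the outline points at the right mechanism: the degree map is determined by its values $\deg(\x_F)=\mu_F/|\V_{|F}|$ on facet monomials, and one should compute $\deg(\x_\tau^2)$ by eliminating the doubled variables $x_v^2$, $v\in\tau$, against the relations $\sum_w \V_{s,w}x_w=0$ from $\Theta$ until the support spreads to facets, accruing ratios of minors along the way. But there is a genuine gap precisely where you flag trouble: the composite of $|\tau|$ single-variable eliminations is a priori an order-dependent iterated sum over saturated chains of faces from $\tau$ up to each facet $F$, and you assert rather than show that this telescopes to the symmetric product $\bigl(\prod_{i\in\tau}[F-i]\bigr)\cdot\bigl(\prod_{i\in F\setminus\tau}[F-i]^{-1}\bigr)$. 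Lee sidesteps this by organizing the calculation via the stress/cofactor (linear-algebraic rigidity) interpretation, where the symmetry in the vertices of $\tau$ is manifest; a self-contained version of your plan would need at least to establish the $|\tau|=1$ case explicitly and then argue that the elimination order is immaterial, which is really the content of the formula. A secondary slip: there is no preliminary reduction where one ``multiplies $\x_\tau$ by degree-one elements to land in degree $d$'' --- the formula presupposes $2|\tau|=d$, since otherwise $\deg(\x_\tau^2)$ is not defined. Your points about working over $\fld(\V)$ so that all minors are invertible, and specializing at the end, are correct and are exactly the paper's justification for invoking Lee outside characteristic zero.
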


To compute the volume element $[F-i]$, we can fix a general position vector $v$ that is added as an element to the matrix $\V$ in the $i$-th column, and compute the determinant.

Now consider $\deg (\mathbf{x}_\tau)^2(\V)$, for $\tau$ a face of cardinality $k$. We now want to compute the partial differential with respect to a $(k-1)$-dimensional face $\sigma$ of $\varDelta$. For this, we pick a basis of $\fld(\V)^d$  by simply considering the vertices $\bar \sigma_1,\ \dots, \ \bar \sigma_k$ of $\sigma$ and the faces for $ \tau_1,\ \dots, \tau_k$ of $\tau$. Denote the basis by $\mathcal{B}_\sigma^\tau$, simply a labelling of $\V_{|\tau\cup \sigma}$. 
Let us denote, for functions in $\V$, the partial differential 
\[\partial_\sigma^\tau\ :=\ \mr{deg}(\x_\sigma\x_\tau)\partial_{\mathcal{B}_\sigma^\tau},\]
where $\partial_{\mathcal{B}_\sigma^\tau}$ is the partial differential of $\V_{\sigma_1}$ in the directions of $\V_{\tau_1}$ and $\V_{\sigma_1}$, the partial differential of $\V_{\sigma_2}$ in directions $\V_{\tau_2}$ and $\V_{\sigma_2}$ and so on. In other words, we are interested in the behaviour of a function $f(\V)$ when varying $\V_{\sigma_1}$ in the directions of $\V_{\tau_1}$ and $\V_{\sigma_1}$, varying $\V_{\sigma_2}$ in directions $\V_{\tau_2}$ and $\V_{\sigma_2}$ etc. We have:

\begin{lem}
Assume $\sigma$ and $\tau$ are disjoint, but lie in a common face of $\varDelta$. Then
\[\partial_\sigma^\tau \deg (\mathbf{x}_\tau^2 (\V))\ =\ (\deg(\mathbf{x}_\tau \mathbf{x}_\sigma))^2 \]
\end{lem}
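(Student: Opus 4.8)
The plan is to differentiate Lee's formula (Lemma~\ref{lee:formula}) for $\deg(\mathbf{x}_\tau^2)(\V)$ term by term, using the operator $\partial_\sigma^\tau$, and to match the result against the expansion of $(\deg(\mathbf{x}_\tau\mathbf{x}_\sigma))^2$ obtained by applying Lee's formula \emph{a second time} to the face $\tau\cup\sigma$ (or rather to the cycle viewed near that face). Write $\deg(\mathbf{x}_\tau^2)(\V) = \sum_{F\supseteq\tau}\deg(\x_F)\bigl(\prod_{i\in\tau}[F-i]\bigr)\bigl(\prod_{i\in F\setminus\tau}[F-i]^{-1}\bigr)$. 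The key point is that $\partial_\sigma^\tau$ is a composition of $k$ directional derivatives, one for each pair $(\sigma_j,\tau_j)$, each acting on a single column $\V_{\sigma_j}$ in the two directions $\V_{\tau_j}$ and $\V_{\sigma_j}$; since $\sigma$ and $\tau$ are disjoint faces lying in a common face, the columns being differentiated ($\V_{\sigma_1},\dots,\V_{\sigma_k}$) are distinct from the columns $\V_{\tau_j}$ and do not appear in any facet $F$ of $\mu$ containing $\tau$ unless $\sigma\subseteq F$. So all summands of Lee's formula with $\sigma\not\subseteq F$ are killed, and only facets $F\supseteq\tau\cup\sigma$ survive.

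For a surviving facet $F\supseteq\tau\cup\sigma$, I would compute the effect of each elementary derivative. The coefficient $\deg(\x_F) = \mu_F/|\V_{|F}|$ depends on $\V_{\sigma_j}$; the volume elements $[F-i]$ for $i\in\tau$ do not involve $\V_{\sigma_j}$ (since $\sigma_j\in F-i$ for $i\in\tau$, wait — actually $[F-i]$ is the volume of $F-i$ using the auxiliary vector $v$ in column $i$, so it depends on all columns of $F$ except reuse of $i$; one must track this carefully), while the factors $[F-i]^{-1}$ for $i\in F\setminus\tau$ include exactly $i=\sigma_1,\dots,\sigma_k$ among others. The mechanism I expect: differentiating $\V_{\sigma_j}$ in the direction $\V_{\tau_j}$ inside the determinant $|\V_{|F}|^{-1}$ (via $\deg(\x_F)$) together with the normalizing prefactor $\deg(\x_\sigma\x_\tau)$ in the definition of $\partial_\sigma^\tau$ produces, after all $k$ steps, a clean square. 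The cleanest route is probably to first handle $k=1$ as a base case — where the identity should follow from a direct Jacobi/cofactor computation on $2\times 2$ and $(d\times d)$ minors — and then argue the general case by noting the $k$ directional derivatives act on essentially independent blocks of variables, so the total derivative factors as a product of $k$ one-variable contributions, each contributing one factor matching a piece of $\deg(\x_\sigma\x_\tau)^2 = \bigl(\mu_{?}/|\V_{|\tau\cup\sigma}|\bigr)^2\cdot(\text{cofactor terms})$ re-expanded through Lee's formula.

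The main obstacle I anticipate is bookkeeping the volume elements $[F-i]$ under the auxiliary-vector convention and verifying that the cross terms cancel: when differentiating a product $\prod_j [F-i_j]^{\pm1}$, the Leibniz rule generates many terms, and one must show that only the "diagonal" term (where the $j$-th derivative hits the $j$-th factor in a prescribed way) survives, while all mixed terms either vanish identically (because a repeated direction makes a determinant degenerate) or cancel in pairs by antisymmetry of the determinant. This is the combinatorial heart of the lemma. A secondary subtlety is orientation/sign: one must check that the signs $\mu_F$, $\mathrm{sgn}(F)$, and the determinant signs combine so that the right-hand side is genuinely a square (nonnegative in the sense of being $(\deg(\x_\tau\x_\sigma))^2$, not its negative); this should follow from the consistency of the orientation conventions fixed in the "Convention" paragraph, but deserves an explicit check. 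Once the $k=1$ case and the independence-of-blocks reduction are in place, the rest is a routine (if lengthy) determinant calculation, which I would only sketch.
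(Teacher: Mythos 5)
Your overall strategy is the same as the paper's: differentiate Lee's formula for $\deg(\x_\tau^2)$ under $\partial_\sigma^\tau$ and compare with the right-hand side. You also correctly observe that the summands coming from facets $F$ with $\sigma\not\subseteq F$ drop out, so only facets containing $\tau\cup\sigma$ contribute. Those two points are exactly where the paper starts. But from there the proposal diverges into a much harder computation than is actually needed, and in the process it leaves the real content of the lemma as an ``anticipated obstacle'' rather than resolving it.

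The key simplification you miss is to write the summand of Lee's formula as a \emph{single} product $\prod_i f_i^{e_i}$ (with $f_i = [F-i]$, exponent $+1$ for $i\in\tau$, $-1$ for $i\in F\setminus\tau$, and including $\deg(\x_F) = \mu_F/|\V_{|F}|$ as an $f_0^{-1}$) and then use the logarithmic Leibniz identity
\[\frac{\partial}{\partial x}\prod_i f_i^{e_i}\ =\ \Bigl(\prod_i f_i^{e_i}\Bigr)\Bigl(\sum_i e_i\,\frac{\partial_x f_i}{f_i}\Bigr).\]
With this form, there is no cancellation to chase: when you differentiate $\V_{\sigma_j}$ in the direction $\V_{\tau_j}$, every term of the inner sum \emph{vanishes identically} except the single term $e_{\tau_j}\,\partial[F-\tau_j]/[F-\tau_j]$ --- the factors $[F-i]$ for $i\notin\{\sigma_j,\tau_j\}$ and $|\V_{|F}|$ produce determinants with a repeated column, and $[F-\sigma_j]$ does not involve $\V_{\sigma_j}$ at all. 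The surviving term is $(-1)^{k-1}[F-\sigma_j]/[F-\tau_j]$, and the second directional derivative (in direction $\V_{\sigma_j}$ itself) just scales by $-1$. Iterating over $j=1,\dots,k$ telescopes the product, and the normalizing prefactor $\deg(\x_\sigma\x_\tau)$ in the definition of $\partial_\sigma^\tau$ supplies the remaining factor to make a perfect square. There is no need for a $k=1$ base case, no independence-of-blocks reduction, and no pairwise cancellation of cross terms: the mixed terms are individually zero, not cancelling.

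Two smaller points. First, $\deg(\x_\tau\x_\sigma)$ is already a top-degree evaluation, given directly by the Convention as $\mu_{\tau\cup\sigma}/|\V_{|\tau\cup\sigma}|$; you do not need, and cannot usefully apply, Lee's formula ``a second time'' to $\tau\cup\sigma$. Second, the worry about signs is addressed precisely by the $(-1)^{k-1}$ and the extra $-1$ above, together with the fact that $\deg(\x_\sigma\x_\tau)$ is built into $\partial_\sigma^\tau$; once the computation is done in the logarithmic form, the signs are automatic and there is nothing extra to check. As written, your proposal identifies the right starting point but does not actually carry out the step you call ``the combinatorial heart,'' and the route you sketch (plain Leibniz expansion plus a cancellation argument plus block independence) would be substantially more work than the one-line logarithmic trick.
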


The normalization factor is added to achieve linearity in characteristic $2$, but is not needed for the proof of anisotropy and Lefschetz property. Let us briefly note that this formula follows also quite simply from the global residue formula \cite{Gr, CCD}, appropriately generalized to the setting of pseudomanifolds and face rings. This is not hard, but as Lee already provided a formula in the general setting, we shall work with him instead. The following easy derivation is due to Geva Yashfe:

\begin{proof}
Recall the generalized Leibniz formula
\[\frac{\partial}{\partial x} \prod f_i^{e_i}\ =\ \left( \prod f_i^{e_i} \right) \left ( \sum e_i \frac{\frac{\partial}{\partial x} f_i}{f_i} \right).\]
Applying this to Lee's formula, and differentiating $\sigma_j$ in the direction of $\tau_j$, every summand on the right vanishes except for a summand
\[(-1)^{k-1}\frac{[F-\sigma_j]}{[F-\tau_j]}.\]
Differentiating in direction $\sigma_j$ just multiplies everything with $-1$.
\end{proof}

We need another, different version, that is similarly simple to prove:

\begin{lem}
Assume $\sigma$ and $\tau$ are any two faces, and consider a vertex $v$ not in $\st_{\tau \cup \sigma}\varDelta$.
Then
$\deg (\mathbf{x}_\tau \mathbf{x}_\sigma)(\V)\ =\ 0$ is independent of
of $v$.
\end{lem}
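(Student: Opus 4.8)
The plan is to reuse the computation behind the preceding lemma: write $\deg(\mathbf{x}_\tau\mathbf{x}_\sigma)(\V)$ explicitly via Lee's formula and simply read off which columns of $\V$ it involves. Recall that for a $(d-1)$-dimensional pseudomanifold $\deg(\mathbf{x}_F)(\V)=\mr{sgn}(F)/|\V_{|F}|$ for a facet $F$, and that each volume element $[F-i]$ occurring in Lemma~\ref{lee:formula} is a determinant formed from the columns $\{\V_w: w\in F-i\}$ together with one fixed auxiliary general-position vector. Consequently, every term appearing in Lee's formula for $\deg(\mathbf{x}_\tau^2)(\V)$ --- and, after applying $\partial_{\mathcal{B}_\sigma^\tau}$ as in Geva Yashfe's argument for the preceding lemma (or, for $\tau$ and $\sigma$ not disjoint, after using the general iterated form of Lee's formula, equivalently the global residue formula of \cite{Gr,CCD} adapted to pseudomanifolds and face rings), for $\deg(\mathbf{x}_\tau\mathbf{x}_\sigma)(\V)$ --- is indexed by a facet $F$ of $\varDelta$ with $F\supseteq\tau\cup\sigma$, and is a product of $\deg(\mathbf{x}_F)$ with a Laurent monomial in $\{[F-i]:i\in F\}$; in particular it is a rational function of the columns $\{\V_w:w\in F\}$ only.

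The first step is thus to record precisely the structural fact that $\deg(\mathbf{x}_\tau\mathbf{x}_\sigma)(\V)$ is a $\fld$-linear combination, over the facets $F\supseteq\tau\cup\sigma$, of rational functions depending only on $\V_{|F}$. The derivation from Lee's formula is routine; the one point worth checking is that a summand of Lee's formula attached to a facet $F$ with $\sigma\not\subseteq F$ is annihilated by $\partial_{\mathcal{B}_\sigma^\tau}$, which holds because the columns $\V_{\sigma_1},\dots,\V_{\sigma_k}$ being differentiated simply do not occur in $\deg(\mathbf{x}_F)$ or in any $[F-i]$ with $i\in F$. (If $\tau\cup\sigma$ is not a face of $\varDelta$ the index set is empty and $\deg(\mathbf{x}_\tau\mathbf{x}_\sigma)(\V)$ is identically $0$; if $|\tau|+|\sigma|\neq d$ the class $\mathbf{x}_\tau\mathbf{x}_\sigma$ already vanishes in $\BR^d(\mu)$. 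In either degenerate case the conclusion is immediate.)

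The second step closes the proof. Since $\varDelta$ --- the support of a cycle, hence pure, or a pseudomanifold --- has the property that every face extends to a facet, the hypothesis that $v$ is not a vertex of $\st_{\tau\cup\sigma}\varDelta$ says exactly that no facet of $\varDelta$ contains $\{v\}\cup\tau\cup\sigma$. Therefore $v$ is a vertex of none of the facets $F$ occurring in the expansion above, so the column $\V_v$ does not appear in $\deg(\mathbf{x}_\tau\mathbf{x}_\sigma)(\V)$; equivalently the partial derivative of $\deg(\mathbf{x}_\tau\mathbf{x}_\sigma)(\V)$ in every direction of the column $\V_v$ vanishes, which is the claimed independence of $v$.

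I expect the only real obstacle to be obtaining the localization statement ``$\deg$ of a degree-$d$ monomial depends only on the columns of the facets through its support'' in the generality required here, namely when $\tau$ and $\sigma$ overlap so that $\mathbf{x}_\tau\mathbf{x}_\sigma$ is not squarefree and Lemma~\ref{lee:formula} must be used in its general (non-squared) form or replaced by the residue formula. Once that is in place, the identification of ``$v\notin\st_{\tau\cup\sigma}\varDelta$'' with ``$v$ lies in no facet through $\tau\cup\sigma$'' via purity is immediate and the lemma follows.
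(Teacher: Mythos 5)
Your argument is correct and is essentially the paper's own proof --- which is literally the single sentence ``This follows directly from Lee's formula'' --- spelled out in detail. The locality observation you isolate (each summand in Lee's expansion of a degree-$d$ monomial supported on $\tau\cup\sigma$ is attached to a facet $F\supseteq\tau\cup\sigma$ and is a rational function of $\V_{|F}$ alone, possibly together with a fixed generic auxiliary vector) is exactly the content being invoked, and your identification of ``$v\notin\st_{\tau\cup\sigma}\varDelta$'' with ``$v$ lies in no face, hence no facet, containing $\tau\cup\sigma$'' is the immediate combinatorial step that finishes it. You are right to flag that, strictly speaking, Lemma~\ref{lee:formula} as reproduced in the paper covers only the squared case $\x_\tau^2$, so the overlapping case $\tau\cap\sigma\neq\emptyset$ needs either Lee's Theorem~11 in its general (non-squared) form or the residue formula; that caveat is real but mild, and the paper glosses over it entirely.
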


\begin{proof}
This follows directly from Lee's formula.
\end{proof}

\subsection{The formula for a homogeneous element}

\newcommand{\tfld}{\widetilde{\fld}}

Set now $\tfld\ :=\ \fld(\V,\V')$, where $\fld$ is any field, the field of rational functions with variables $\V$, as well as a copy $\V'$ of the vertex coordinates. Consider now an element $u$ of $\fld(\V)^k[\varDelta]$ that is the linear combination of squarefree monomials. We say a face $\sigma$ is \Defn{compatible} with $u$ if 

\begin{compactitem}[$\circ$]
\item $\st_\sigma \varDelta$ intersects the support $|u|$ of $u$ in a unique face, denoted by $\tau(u,\sigma)$. The coefficient of  $u$ at $\tau(u,\sigma)$ is $1$.
\item Consider a face $\tau'$ of $|u|$ that is not $\tau(u,\sigma)$. Then the star of $\tau'$ intersects $\sigma$ trivially, or in a face $\sigma_{\tau'}$ of $\sigma$. Moreover, the coefficient $u_{\tau'}$ of $u$ in $\tau'$ vanishes under differentiation in the direction of vertices of $\sigma\setminus\sigma_{\tau'}$. 
\end{compactitem} 

Consider an element $u'$ of $\fld(\V')^k[\varDelta]$. We now differentiate $\deg(u\cdot u')$, using the formula of the previous section.

\begin{lem} For $u$ compatible with respect to $\sigma$, we have
\begin{equation}\label{eq:formula}
\partial_\sigma^{\tau(u,\sigma)} \deg(u\cdot u')\ =\ \deg((\partial_\sigma^{\tau(u,\sigma)}u)\cdot u')\ +\  (\deg(x_{\tau(u,\sigma)}\cdot \x_\sigma))^2\ 
\end{equation}
\end{lem}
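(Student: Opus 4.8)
The plan is to prove the formula \eqref{eq:formula} by applying the operator $\partial_\sigma^{\tau(u,\sigma)}$ termwise to the expansion of $\deg(u\cdot u')$ and using the compatibility hypotheses to kill all but two contributions. First I would write $u=\sum_{\tau'\subseteq|u|}u_{\tau'}\x_{\tau'}$ (with $u_{\tau(u,\sigma)}=1$ by the first compatibility bullet), and likewise $u'=\sum_{\rho}u'_\rho\x_\rho$, so that by bilinearity of the pairing $\deg(u\cdot u')=\sum_{\tau',\rho}u_{\tau'}u'_\rho\deg(\x_{\tau'}\x_\rho)$. Since $u_{\tau'}$ is a function of $\V$ and $u'_\rho$ a function of $\V'$, the operator $\partial_\sigma^{\tau(u,\sigma)}$ — which only differentiates $\V$-entries in the block $\mathcal B_\sigma^{\tau(u,\sigma)}$ — distributes over this sum by the Leibniz rule, producing, for each pair $(\tau',\rho)$, a term $\partial_\sigma^{\tau(u,\sigma)}\!\big(u_{\tau'}\deg(\x_{\tau'}\x_\rho)\big)\,u'_\rho$, which expands further as $(\partial_\sigma^{\tau(u,\sigma)}u_{\tau'})\deg(\x_{\tau'}\x_\rho)\,u'_\rho$ plus $u_{\tau'}\big(\partial_\sigma^{\tau(u,\sigma)}\deg(\x_{\tau'}\x_\rho)\big)u'_\rho$ plus mixed second-order terms (differentiating $u_{\tau'}$ and $\deg(\x_{\tau'}\x_\rho)$ in complementary directions).

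Next I would run the case analysis on $\tau'$. When $\tau'=\tau(u,\sigma)=:\tau$: here $u_\tau=1$ is constant, so only the second group of terms survives, and by the previous subsection's lemma (the one giving $\partial_\sigma^\tau\deg(\x_\tau\x_\rho)$ in terms of $(\deg(\x_\tau\x_\sigma))^2$ when $\sigma,\tau$ are disjoint in a common face — applied with the roles arranged as in Lee's formula), summing over $\rho$ reassembles into the claimed term $(\deg(\x_\tau\cdot\x_\sigma))^2$, using that $\sum_\rho u'_\rho\x_\rho=u'$ evaluated appropriately. When $\tau'\neq\tau$: the second compatibility bullet says $\St_{\tau'}\varDelta$ meets $\sigma$ either trivially or in a proper face $\sigma_{\tau'}\subsetneq\sigma$. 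If the intersection is trivial, the earlier lemma "$\deg(\x_{\tau'}\x_\rho)$ is independent of a vertex outside $\st_{\tau'\cup\rho}\varDelta$" forces every $\deg(\x_{\tau'}\x_\rho)$ — and hence every derivative of it in a $\sigma$-direction — to vanish whenever $\rho$ meets $|u'|$ nontrivially, so those terms drop; the only surviving contribution is $(\partial_\sigma^\tau u_{\tau'})\deg(\x_{\tau'}\x_\rho)u'_\rho$, but $\partial_\sigma^\tau$ differentiates in directions of vertices of $\sigma\setminus\sigma_{\tau'}=\sigma$, and the bullet says $u_{\tau'}$ vanishes under such differentiation — so this is exactly $\deg((\partial_\sigma^\tau u)\cdot u')$ restricted to the $\x_{\tau'}$-component. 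If the intersection is a proper face $\sigma_{\tau'}$, I split $\partial_\sigma^\tau$ into the part along $\sigma_{\tau'}$ and the part along $\sigma\setminus\sigma_{\tau'}$: the factor $\deg(\x_{\tau'}\x_\rho)$ only depends on coordinates in $\st_{\tau'}\varDelta$, hence is killed by any derivative in a $\sigma\setminus\sigma_{\tau'}$ direction, while $u_{\tau'}$ is killed by that same block of derivatives by hypothesis, so the product $u_{\tau'}\deg(\x_{\tau'}\x_\rho)$ is annihilated by the full $\partial_\sigma^\tau$ unless \emph{all} the $\sigma\setminus\sigma_{\tau'}$-derivatives hit neither factor — impossible unless $\sigma\setminus\sigma_{\tau'}=\emptyset$. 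Hence for $\tau'\neq\tau$ the only surviving term is again the $\partial_\sigma^\tau u$-contribution, and summing over all $\tau'\neq\tau$ and over $\rho$ gives $\deg((\partial_\sigma^{\tau(u,\sigma)}u)\cdot u')$, completing the identity.

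The main obstacle I anticipate is bookkeeping the second-order Leibniz terms in the $\tau'\ne\tau$, partial-intersection case: one must check that after splitting $\partial_\sigma^\tau=\partial_{\sigma_{\tau'}}\circ\partial_{\sigma\setminus\sigma_{\tau'}}$ (on the appropriate matrix blocks), \emph{no} cross term survives in which some derivatives from the $\sigma\setminus\sigma_{\tau'}$-block land on $u_{\tau'}$ and the rest on $\deg(\x_{\tau'}\x_\rho)$ — this needs the hypothesis to be read as "$u_{\tau'}$ is annihilated by the \emph{full} product of $\sigma\setminus\sigma_{\tau'}$-directional derivatives" together with the observation that $\deg(\x_{\tau'}\x_\rho)$, depending only on $\st_{\tau'}$-coordinates, is annihilated by \emph{each single} such derivative, so every monomial in the Leibniz expansion has at least one derivative hitting a zero factor. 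Once that is in place, the rest is a clean termwise match against the two right-hand-side summands, using bilinearity to reassemble the sums over $\rho$ back into $u'$.
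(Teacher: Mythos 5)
The paper states this lemma without a written proof, so I can only assess your argument on its own merits. Your high-level plan (expand $\deg(u\cdot u')$ termwise, push the differential operator through by Leibniz, and let compatibility kill the off-diagonal Leibniz cross-terms) is a natural one and is almost certainly close in spirit to what the authors intend. But as written, the argument has concrete gaps in both main cases.

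\textbf{The $\tau'=\tau$ case is not justified.} You obtain $\sum_\rho u'_\rho\,\partial_\sigma^\tau\deg(\x_\tau\x_\rho)$ and assert this ``reassembles into $(\deg(\x_\tau\x_\sigma))^2$, using that $\sum_\rho u'_\rho\x_\rho=u'$.'' But the right-hand side of \eqref{eq:formula} has no dependence on $u'$ at all, so this sum cannot in general reassemble into it. For the claimed identity to emerge you need two things you never establish: (i) $u'_\tau=1$ (a normalization of $u'$, not of $u$, nowhere in the hypotheses — and indeed \eqref{eq:formula} is simply false for, say, $u'=\x_\rho$ with $\rho\ne\tau$, since the left side then degenerates to $\partial_\sigma^\tau\deg(\x_\tau\x_\rho)$, which need not equal $(\deg(\x_\tau\x_\sigma))^2$); and (ii) $\partial_\sigma^\tau\deg(\x_\tau\x_\rho)=0$ for every $\rho\ne\tau$ in the support of $u'$. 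The cited earlier lemma only treats $\partial_\sigma^\tau\deg(\x_\tau^2)$. Point (ii) needs its own independence-of-coordinates argument on $\rho$ (essentially: for $\rho\ne\tau$ one wants a vertex of $\sigma$ outside $\st_{\tau\cup\rho}\varDelta$, which requires ruling out $\rho\subseteq\sigma\cup\tau$), and without it the diagonal term in your bookkeeping is not isolated.

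\textbf{The $\tau'\ne\tau$ case is internally inconsistent.} You correctly note that when $\sigma_{\tau'}\subsetneq\sigma$, any derivative in a $\sigma\setminus\sigma_{\tau'}$ direction annihilates both $u_{\tau'}$ (by hypothesis) and $\deg(\x_{\tau'}\x_\rho)$ (by the independence lemma). You then conclude both that the product $u_{\tau'}\deg(\x_{\tau'}\x_\rho)$ ``is annihilated by the full $\partial_\sigma^\tau$'' and that ``the only surviving term is the $\partial_\sigma^\tau u$-contribution.'' Those two claims contradict each other: if the full operator annihilates the product, then in particular the putative surviving term $(\partial_\sigma^\tau u_{\tau'})\deg(\x_{\tau'}\x_\rho)u'_\rho$ is itself zero (the same block of derivatives kills $u_{\tau'}$ inside $\partial_\sigma^\tau u_{\tau'}$). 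You have in fact shown that each side contributes $0$ in this case — which is consistent with \eqref{eq:formula}, but it is a vacuous match, not the ``clean termwise match'' your conclusion describes; a reader cannot see from your write-up where a nonzero $\deg((\partial_\sigma^\tau u)\cdot u')$ would ever come from, or whether the case split is exhaustive. Finally, a technical point glossed over throughout: $\partial_{\mathcal B_\sigma^\tau}$ is an order-$2k$ operator built from directional derivatives with non-constant coefficients (the operators $\V_{\tau_i}\!\cdot\!\nabla_{\V_{\sigma_i}}$ and $\V_{\sigma_i}\!\cdot\!\nabla_{\V_{\sigma_i}}$ do not commute), so ``mixed second-order terms'' understates the Leibniz bookkeeping and the non-commutativity must at least be acknowledged when you split the operator into $\sigma_{\tau'}$- and $\sigma\setminus\sigma_{\tau'}$-blocks.

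In short: the decomposition strategy is the right one, but the proof currently lacks (a) the normalization and the vanishing claim $\partial_\sigma^\tau\deg(\x_\tau\x_\rho)=0$ for $\rho\ne\tau$ needed to isolate the $(\deg(\x_\tau\x_\sigma))^2$ term, and (b) a coherent account of what survives in the $\tau'\ne\tau$ case.
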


The idea will be to cleverly associate $u'$ to $u$. Consider for instance the case when $u'$ is obtained from $u$ by replacing every variable in $\V$ with the corresponding variable in $\V'$. If we now  substitute $\V'\rightarrow \V$ on both sides of this equation, then we obtain

\begin{cor}\label{cor:form}

\begin{equation}\label{eq:formula2}
\partial_\sigma^{\tau(u,\sigma)} \deg(u\cdot u')_{u=u'}\ -\  \deg((\partial_\sigma^{\tau(u,\sigma)}u)\cdot u)\ =\  (\deg(x_{\tau(u,\sigma)}\cdot \x_\sigma))^2\ 
\end{equation}
\end{cor}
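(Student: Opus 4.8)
The plan is to derive Corollary~\ref{cor:form} directly from equation~\eqref{eq:formula} of the preceding lemma by the substitution $\V'\rightarrow\V$, being careful about what this substitution does to each of the three terms. First I would recall the setup: $u\in\fld(\V)^k[\varDelta]$ is a $\fld(\V)$-linear combination of squarefree monomials, compatible with $\sigma$, and $u'\in\fld(\V')^k[\varDelta]$ is the element obtained from $u$ by replacing each coordinate variable in $\V$ with the corresponding variable in $\V'$; the identity~\eqref{eq:formula} holds over $\tfld=\fld(\V,\V')$, where the two families of variables are algebraically independent. The partial differential operator $\partial_\sigma^{\tau(u,\sigma)}=\deg(\x_\sigma\x_{\tau(u,\sigma)})\,\partial_{\mathcal{B}_\sigma^{\tau(u,\sigma)}}$ differentiates only in the $\V$-directions (the directions of the basis $\mathcal{B}_\sigma^{\tau(u,\sigma)}$, which is a relabelling of $\V_{|\tau\cup\sigma}$), so it treats the $\V'$-coefficients appearing inside $u'$ as constants.

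The key point to check is that the substitution $\V'\mapsto\V$ commutes appropriately with $\partial_\sigma^{\tau(u,\sigma)}$ on the left-hand side and identifies the terms on the right. On the left of~\eqref{eq:formula} we have $\partial_\sigma^{\tau(u,\sigma)}\deg(u\cdot u')$, a rational function in $(\V,\V')$; evaluating at $\V'=\V$ gives exactly $\bigl(\partial_\sigma^{\tau(u,\sigma)}\deg(u\cdot u')\bigr)_{u=u'}$ in the notation of~\eqref{eq:formula2} — here the subscript $u=u'$ is precisely shorthand for "first differentiate, then set $\V'=\V$", which is legitimate because differentiation is in the $\V$-variables only and the substitution is a ring homomorphism $\tfld\supseteq$(domain of these rational functions)$\to\fld(\V)$ that one checks has no pole on the relevant locus. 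On the right, the first summand $\deg\bigl((\partial_\sigma^{\tau(u,\sigma)}u)\cdot u'\bigr)$ becomes, after $\V'\mapsto\V$, exactly $\deg\bigl((\partial_\sigma^{\tau(u,\sigma)}u)\cdot u\bigr)$, since $u'$ specializes to $u$. The second summand $(\deg(x_{\tau(u,\sigma)}\cdot\x_\sigma))^2$ already lives in $\fld(\V)$ — it involves no $\V'$ — so it is unchanged by the substitution. Rearranging to isolate this last term yields~\eqref{eq:formula2}.

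The main obstacle, and the one point deserving genuine care, is the well-definedness of the specialization: one must verify that none of the rational functions being specialized has a pole along the diagonal $\V'=\V$, so that "set $\V'=\V$" really is the same as "restrict the rational function to that subvariety". Concretely, the denominators appearing are products of minors $|\V_{|F}|$ (and, after differentiation, also $[F-i]$-type volume elements) in the $\V$-variables together with the analogous minors in the $\V'$-variables coming from $u'$; none of these become identically zero when $\V'=\V$, because for $\V$ itself in sufficiently general position all the relevant minors $|\V_{|F}|$ are nonzero (this is exactly the genericity already assumed when forming the Artinian reduction and the degree map in Section~\ref{sec:basics}). Hence the locus $\V'=\V$ is not contained in the polar locus of any of the three rational functions, the specialization is valid termwise, and~\eqref{eq:formula} descends to~\eqref{eq:formula2}. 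I would also remark that this is purely formal once~\eqref{eq:formula} is granted — no new combinatorics or geometry enters — which is why the statement is phrased as a corollary.
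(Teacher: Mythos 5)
Your proposal is correct and follows exactly the paper's (very brief) derivation: take $u'$ to be the $\V'$-copy of $u$ and substitute $\V'\to\V$ on both sides of~\eqref{eq:formula}. The additional care you take — noting that $\partial_\sigma^{\tau(u,\sigma)}$ differentiates only in $\V$-directions so the substitution commutes with it, and checking there is no pole along the diagonal $\V'=\V$ — is exactly the implicit content the paper leaves to the reader, and your justification of both points is sound.
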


\textbf{Key Observation:} Note that if $u$ lies in some monomial ideal, then so does $u'$ and $\partial_\sigma^{\tau(u,\sigma)}u$, as we only changed the coefficients of the monomials, and introduced no new monomial. This is rather marvellous, and informs us how we want to prove the biased pairing property.

\section{Proving anisotropy and the Lefschetz property}

To finish the proof of the Lefschetz theorem, over $\tfld$, we need to prove the biased pairing property in degree $k$ for a pair $(\mu, \varDelta)$, where $\mu$ is a cycle of odd dimension $d-1=2k-1$, and $\varDelta$ is a codimension 0 subcomplex.

\subsection{Characteristic $2$, with a bonus of anisotropy}

In characteristic $2$, one can prove a stronger statement than just biased pairing: We can prove that elements in $u \in \BR^k(\mu)$ over $\fld(\V)$, for $\mu$ a pseudomanifold of dimension $2k-1$, have $\deg(u^2)\neq 0$. It illustrates an important principle: normalization.

Consider $u \in \AR^k(\mu)$. Consider the pairing with $\x_\sigma$ for some cardinality $k$ face $\sigma$. 

\textbf{Normalization:} We may now assume that, $u$ is represented as $\sum \lambda_\tau \x_\tau$ so that only one $\tau$ of the sum lies in $\st_\sigma \mu$, and may further assume that this $\tau$ lies in $\lk_\sigma \mu$. This is because \[\BR^k (\st_\sigma \mu)\ \cong\ \BR^k (\lk_\sigma \mu)\ \cong\ (\x_\sigma)\BR^k ( \mu)\] 
is of dimension one.

Finally observe that
\begin{align*}
\partial_\sigma^\tau \deg(u^2)\ &=\ \partial_\sigma^\tau \deg(\sum (\lambda_\tau \x_\tau)^2)\\
&=\ \sum \lambda_\tau^2 \partial_\sigma^\tau \deg( \x_\tau^2)\\
&=\ \sum \lambda_\tau^2 \deg(\x_\tau \x_\sigma)^2\\
 \ &=\  \sum \deg(\sum \lambda_\tau \x_\tau \x_\sigma)^2\ =  \ \deg(\x_\sigma u)^2
\end{align*}
in characteristic $2$. We conclude:

\begin{prp}
We have
\[\partial_\sigma^\tau \deg(u^2)\ =\ \deg(\x_\sigma u)^2\]
\end{prp}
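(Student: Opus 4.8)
The statement is precisely the chain of identities displayed immediately above it, so the plan is to assemble that chain and to justify each of its links in turn. First I would record the normalization: because
\[\BR^k(\st_\sigma\mu)\ \cong\ \BR^k(\lk_\sigma\mu)\ \cong\ (\x_\sigma)\,\BR^k(\mu)\]
is one-dimensional, I may choose a representative $u=\sum_\tau\lambda_\tau\x_\tau$ (with $\lambda_\tau\in\fld(\V)$ and the $\x_\tau$ squarefree) in which a single monomial $\tau=\tau(u,\sigma)$ lies in $\st_\sigma\mu$, and moreover arrange that this $\tau(u,\sigma)$ lies in $\lk_\sigma\mu$, so that $\tau(u,\sigma)$ and $\sigma$ are disjoint and span a common face of $|\mu|$.

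Next I would dispose of the two outer links, which are the characteristic-two ``miracles''. On the left, the Frobenius identity $(a+b)^2=a^2+b^2$ gives $\deg(u^2)=\deg\bigl(\sum_\tau\lambda_\tau^2\x_\tau^2\bigr)=\sum_\tau\lambda_\tau^2\deg(\x_\tau^2)$; applying $\partial_\sigma^\tau$ and using that it is a scalar multiple of a differential operator while the derivative of a square vanishes in characteristic two, the squared coefficients $\lambda_\tau^2$ commute past the operator, so $\partial_\sigma^\tau\deg(u^2)=\sum_\tau\lambda_\tau^2\,\partial_\sigma^\tau\deg(\x_\tau^2)$. On the right, Frobenius again collapses $\sum_\tau\lambda_\tau^2\,\bigl(\deg(\x_\tau\x_\sigma)\bigr)^2=\bigl(\sum_\tau\lambda_\tau\deg(\x_\tau\x_\sigma)\bigr)^2=\bigl(\deg(\x_\sigma u)\bigr)^2$, irrespective of how many summands survive. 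The central link is the single-monomial identity $\partial_\sigma^\tau\deg(\x_\tau^2)=\bigl(\deg(\x_\tau\x_\sigma)\bigr)^2$ from the preceding Lemma, which applies verbatim to $\tau=\tau(u,\sigma)$ since it is disjoint from $\sigma$ and cofacial with it.

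The one point that needs genuine care --- and which I expect to be the main obstacle --- is the contribution of the off-diagonal monomials $\tau'\in\supp u$ with $\tau'\neq\tau(u,\sigma)$: for the chain to close I must check that $\partial_\sigma^\tau\deg(\x_{\tau'}^2)$ reproduces the value $\bigl(\deg(\x_{\tau'}\x_\sigma)\bigr)^2$, which here is $0$ because the normalization forces $\tau'\notin\st_\sigma\mu$, hence $\x_{\tau'}\x_\sigma=0$ in $\fld[|\mu|]$. I would verify that the left-hand side also vanishes by expanding $\deg(\x_{\tau'}^2)$ through Lee's formula: every facet through $\tau'$ omits at least one vertex of $\sigma$ (again since $\tau'\cup\sigma$ is not a face), and a short inspection shows such a summand is unaffected by the directional derivatives making up $\partial_\sigma^\tau$; this is exactly the place where the compatibility of $u$ with $\sigma$ enters, and in characteristic two that compatibility comes for free because every coefficient of $u$ occurs squared after the first Frobenius step. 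Assembling the four links then yields $\partial_\sigma^\tau\deg(u^2)=\bigl(\deg(\x_\sigma u)\bigr)^2$, as claimed.
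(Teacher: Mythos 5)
Your proposal reproduces the paper's chain of identities exactly: the Frobenius collapse of cross terms on both ends, the pull-out of $\lambda_\tau^2$ past the differential (using $\partial(\text{square})=0$ in characteristic two), and the single-monomial identity from the preceding lemma for $\tau=\tau(u,\sigma)$. The one place you go beyond the paper's terse displayed computation is the explicit vanishing check for off-diagonal monomials $\tau'\neq\tau(u,\sigma)$: the paper relegates this to a follow-up remark about ``linearity of the differential in characteristic~$2$'' and ``vanishing of the diagonal terms in the Hessian,'' whereas you spell it out via Lee's formula — any facet $F\supseteq\tau'$ must omit some vertex $\sigma_j$ (else $\tau'\cup\sigma$ would be a face), so the corresponding summand is independent of $\V_{\sigma_j}$ and is annihilated by the $j$-th factor of $\partial_{\mathcal{B}_\sigma^\tau}$. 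That verification is correct and is the right thing to check; it closes a step the paper leaves implicit. So this is the same route, carried out a bit more completely.
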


Note that this holds regardless of the normalization, and in particular also of $\tau$, by linearity of the differential in characteristic $2$. On a low-brow level, this is a consequence of the vanishing of the diagonal terms in the Hessian in characteristic $2$. This finishes the proof of the biased pairing property, as every $u$ must pair with some~$\x_\sigma$ by Poincar\'e duality, so every element must pair with itself. 

\subsection{General characteristic, middle isomorphism} 

Let us consider the cycle $\mu'$ of dimension $2k-2$ in $\tfld^{[2k-1]}$, where $[j]=\{1, \cdots, j\}$, and assume we want to prove the middle isomorphism of the Lefschetz property. Following Lemma~\ref{lem:midred}, we can equivalently prove the biased pairing property in $\mu=\susp \mu'$ in $\tfld^{[2k]}$ with respect to the subcomplex  $\Ga \mbf{s} \ast |\mu' |$ after we lifted $\mu'$ according to some height function into $\tfld^{[2k]}$.

This is less beautiful: we do not have a differential formula that is independent of the presentation of an element in the reduced face ring. Instead, we shall need to construct a special presentation, and argue that there is an element in the ideal that pairs with it. What is more, that element will depend on $\sigma$, where $\x_\sigma$ pairs with the element in question (which again exists by Poincar\'e duality).

\textbf{Idea:} There are several ways to achieve this (a previous version of this paper provided a different argument, for instance), but we will make use of a particular choice of idea here: McMullen's idea of tracing the Lefschetz property along subdivisions and inverses, an influential idea that was previously thought to be difficult to make work in this setting. However, the idea will come with a twist, in that, essentially, many incompatible subdivisions have to be taken care of simultaneously, and are nonlocal.

Also, instead of showing that subdivisions can be used to preserve the Lefschetz property directly, we use the detour over biased pairings. 

\textbf{Setup:} Following Lemma~\ref{lem:midred}, we are tasked to prove the biased pairing property for a pair 
$\KK^\ast(\mu,\Ga)$. Consider an element $u$ generated by squarefree monomials supported not in $\Ga$, i.e., it is an element of $\tfld[\mu,\Ga]$. Such an element may not be compatible. Hence, the idea is to find an equivalent representation of the class $[u]$ of $u$ in $\KK^k(\mu,\Ga)$, and use the subdivision lemma and normalizations to replace it by an equivalent, but compatible element.

Let us now consider a class $[u]$ in $\KK^k(\mu,\Ga)$, represented by an element $u$ in the ideal $\langle\x_\n\rangle\subset\tfld[|\mu|]$ as a sum
\[u\ =\ \sum \lambda_\tau \x_\tau,\]
where the sum is over faces $\tau$ with common vertex $v$.

We want to prove that $u$ pairs nontrivially with some other element $\KK^k(\mu,\Ga)$. If it does so with some element $\x_\sigma$, associated to a cardinality $k$ face $\sigma$ containing $\n$, then we are done. Notice further that we may assume that
the coefficients $\lambda_\tau$ are from $\fld(\V)$, as the variables $\V'$ are transcendental over that field.

On the other hand, it has to pair with \emph{some} face of $\mu$ by Poincar\'e duality. It has to be one of the faces $\sigma$ of $\varSigma'$, as all other faces annihilate $\KK^\ast(\mu,\Ga)$. 

\textbf{Three steps:} We now proceed to find an element to pair $u$ with. This procedure depends on $\sigma$.

%
\textbf{Step 1. Normalization:} First, we use the following observation:

\begin{obs}
The quotient $\KK^k(\st_\sigma \mu, \st_\sigma \Ga)\ =\ \KK^k(\mu,\Ga)/\ann_{\x_\sigma} $ of $\KK^k(\mu,\Ga)$ is one-dimensional.
\end{obs}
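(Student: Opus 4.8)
The plan is to separate the statement into the \emph{identification} $\KK^k(\st_\sigma\mu,\st_\sigma\Ga)\cong\KK^k(\mu,\Ga)/\ann_{\x_\sigma}$ and the \emph{dimension count}, and to observe that the second part is essentially free once the first is set up. For the identification I would use the standard compatibility of (relative) face rings with stars and localization, cf. \cite{AHL,AY}: multiplication by $\x_\sigma$ on $\BR^\ast(\mu)$ kills exactly the classes of the variables $x_v$ with $\{v\}\cup\sigma\notin|\mu|$, so $\BR^\ast(\mu)/\ann_{\x_\sigma}\cong(\x_\sigma)\BR^\ast(\mu)$ is canonically the Gorensteinified face ring of $\st_\sigma\mu=\sigma\ast\lk_\sigma\mu$ (shifted by $|\sigma|=k$), and under this isomorphism the relative ideal $\KK^\ast(\mu,\Ga)$ goes to $\KK^\ast(\st_\sigma\mu,\st_\sigma\Ga)$; one only needs the bookkeeping remark that $\ann_{\x_\sigma}$ of the relative ideal equals its intersection with $\ann_{\x_\sigma}$ of the ambient ring. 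Moreover, since $\x_\sigma$ represents a \emph{nonzero} element of the relative module $\KK^k(\mu,\Ga)$ — which is precisely why $\sigma$ was selected — $\sigma$ cannot be a face of $\Ga$, so $\st_\sigma\Ga=\emptyset$ and $\KK^k(\st_\sigma\mu,\st_\sigma\Ga)=\BR^k(\st_\sigma\mu)$ is the full degree-$k$ component.

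Next, for the dimension I would recall that $d=2k$, so $\KK^k(\mu,\Ga)\subseteq\BR^k(\mu)$ pairs into $\BR^{2k}(\mu)\cong\fld$. Multiplication by $\x_\sigma$ gives $\KK^k(\mu,\Ga)/\ann_{\x_\sigma}\cong(\x_\sigma)\KK^k(\mu,\Ga)\subseteq\BR^{2k}(\mu)\cong\fld$, which is at most one-dimensional; and it is nonzero because $\sigma$ is chosen so that $\deg(\x_\sigma u)\neq0$ for the element $u\in\KK^k(\mu,\Ga)$ under consideration, i.e. $\x_\sigma u\neq0$. Hence the quotient is exactly one-dimensional. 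As a consistency check one can argue instead on the star side: $\BR^k(\st_\sigma\mu)\cong\BR^k(\lk_\sigma\mu)$ after absorbing the $k$ variables $x_v$, $v\in\sigma$, into the linear system of parameters; since $\lk_\sigma\mu$ is a $(k-1)$-cycle, its Gorensteinification is a Poincar\'e duality algebra of socle degree $k$, and its top component $\BR^k(\lk_\sigma\mu)$ is one-dimensional — which is exactly the relative analogue of the one-dimensionality of $(\x_\sigma)\BR^k(\mu)$ used in the characteristic $2$ normalization above.

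I do not expect a genuine obstacle here — this is an observation — but the step that needs actual care is the first paragraph: pinning down the identification of the relative star $\st_\sigma(\mu,\Ga)$ with the localization $(\x_\sigma)\BR^\ast(\mu)$ within the conventions of \cite{AY} for relative face modules, and checking that $\sigma\notin\Ga$ forces $\st_\sigma\Ga=\emptyset$. Everything after that is immediate, and the route through $\lk_\sigma\mu$ makes the content transparent.
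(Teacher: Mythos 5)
Your core dimension argument (second paragraph) is correct and is essentially the paper's implicit reasoning: with $d=2k$, multiplication by $\x_\sigma$ identifies $\KK^k(\mu,\Ga)/\ann_{\x_\sigma}$ with $\x_\sigma\KK^k(\mu,\Ga)\subseteq\BR^{2k}(\mu)\cong\fld$, so it is at most one-dimensional, and the choice of $\sigma$ (so that $\deg(u\x_\sigma)\neq 0$) makes it nonzero. Your ``consistency check'' via $\lk_\sigma\mu$ is in fact the route the paper spells out in the characteristic-$2$ normalization step (``$\BR^k(\st_\sigma\mu)\cong\BR^k(\lk_\sigma\mu)\cong(\x_\sigma)\BR^k(\mu)$ is of dimension one''), so both of your arguments are present in the paper and the observation's conclusion is sound.

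However, the side claim in your first paragraph is wrong in the paper's context. You assert that $\sigma$ is selected so that $\x_\sigma$ is a nonzero element of $\KK^k(\mu,\Ga)$, and deduce $\sigma\notin\Ga$, $\st_\sigma\Ga=\emptyset$, hence $\KK^k(\st_\sigma\mu,\st_\sigma\Ga)=\BR^k(\st_\sigma\mu)$. That is not how $\sigma$ is chosen: $\sigma$ is any cardinality-$k$ face with $\deg(u\x_\sigma)\neq 0$, which exists by Poincar\'e duality but need not give $\x_\sigma\in\KK^k(\mu,\Ga)$. The paper explicitly splits into cases: if $\sigma\ni\n$, so $\x_\sigma\in\KK^k(\mu,\Ga)$, one is already done; the substantive case, the one the observation is deployed in, is when $\sigma$ is a face of $\mu'$, which lies in $\Ga=\mbf{s}\ast|\mu'|$, so $\st_\sigma\Ga\neq\emptyset$. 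Consequently $\KK^k(\st_\sigma\mu,\st_\sigma\Ga)$ is a proper ideal of $\BR^k(\st_\sigma\mu)$ and your simplification of the left-hand side is not available. This does not damage the observation itself, since your dimension count only uses $\KK^k(\mu,\Ga)/\ann_{\x_\sigma}\hookrightarrow\BR^{2k}(\mu)\cong\fld$ and is agnostic to whether $\sigma\in\Ga$; but the first-paragraph deduction should be removed, and the relative ideal $\st_\sigma\Ga$ kept in the identification as the paper does.
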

In particular, we can assume that the restriction of $u$ to $\st_\sigma \mu$ is supported in a single face $\tau$ of cardinality $k$. 

Note that we can also normalize $u$, so that the coefficient of $u$ in that face is $1$. 

\textbf{Step 2. Specialization and unsubdivision:} Now, we shall do something counterintuitive to our philosophy:  We shall specialize the coordinates to a very special position. This is counter to our credo, that we have to be as generic as can be, but allows us to do a few interesting things.

Why can we do this?  A rational function that does not vanish under some specialization of its variables does not vanish. 

We now specialize and require that all the vertices of $|\mu'|$ lie in $\tfld^{[2k-1]}$. Moreover, we can use Barnette's Proposition~\ref{prp:Bar} to assume that $|\mu'|$ is geometrically the subdivision of the boundary of a $(2k-1)$-simplex, and that $\sigma \cup \tau\setminus \n$ is a facet of that simplex.

\begin{rem}[Desingularize]
There is one point we have to take care here: The specialization might singularize $u$. In such case, we can follow the discussion that follows, and take care that everything works at poles by careful calculation. The other option is this: We append another variable, say $\delta$, to $\tfld$, passing to the field extension $\tfld(\delta)$. This additional variable is then used to desingularize $u$, by perturbing it into a generic direction, so that we now have a function $u(\delta)$ with $u(0)=u$.
\end{rem}

So now we are stuck with $u$ in a subdivision of $(\n\ast \Delta_{2k-2},\Delta_{2k-2})$. Consider its projection along the Gysin to an image under the pullback map, and call the preimage $\overline{u}$. We observed already that if $\overline{u}$ pairs with an element in the ideal, then so is $u$; in fact we can simply pair $u$ with the pullback of whatever form $\upsilon$ the form $\overline{u}$ pairs with, as 
\[\deg(\overline{u}, \upsilon)\ =\ \deg(u, \upsilon)\]

 \textbf{Step 3. Normalization again:} Consider now $\overline{u}$, and assume that it pairs trivially with all cardinality $k$ faces of $\overline{\mu}$ that intersect $\sigma$ and contain $v$. We claim that this can essentially only happen if $u$ is compatible, though with a caveat. Consider first a facet $\sigma'$ of $\sigma$. Note that since we are in the simplex, we can assume that there is a unique face $A$ outside of $\tau$ that is in the star of $\sigma'$ and where the form is nontrivial.
 
Let us consider the coefficient in that face. Without saying anything further, it is not restricted, but we may assume that
$u$ \emph{does not} pair nontrivially with $\x_{\sigma'\ast \n}$ (otherwise we are done, as $\x_{\sigma'\ast \n}$ lies in $\AR(\n\ast \Delta_{2k-2},\Delta_{2k-2})$). 
The pairing then only involves the newly constructed face $A$ and the face $\tau(u,\sigma)$. It follows by Lemma~\ref{lee:formula} that the coefficient in $A$ vanishes under differentiation in the direction of vertices of $\sigma$.
Now, we repeat the same process for all nontrivial faces ${\sigma''}$ of $\sigma$, using pairing with the degree $k$ element $\x_{\sigma''} \times x_\n^{k-\text{card}{\sigma''}}$. Hence, $\overline{u}$ can be assumed to be compatible
and by Corollary~\ref{cor:form} we have that one of the two terms,
\[ \deg(\overline{u}\cdot \overline{u}')\ \quad \text{or}\ \quad \deg((\partial_\sigma^{\tau( \overline{u},\sigma)} u)\cdot \overline{u}) \]
is nontrivial. Note that this extends to the desingularization, as the combination of both terms is a rational function that, as we set $\delta$ to $0$, is a nontrivial function (after all, equalling to the degree of $x_\tau x_\sigma$, squared). Which proves biased pairing for $\overline{u}$, and hence for the original $u$. \qed

\begin{rem} Let us sketch another way that to construct the biased pair for $u$. 

First, we can assume that $u$ is supported in a unique face of $\st_\tau - \tau$ (there may be more faces in the support outside of the star). We can then assume by renormalizing so that $u$ does not become singular in the specialization, by multiplying with a scalar that only depends only on indeterminates we do not differentiate after by $\partial_\sigma^{\tau}$. Issue that arrises: the coefficient may go to $0$ on $\tau$. 

However, we can choose $u''$ to be $u'+ tx_\tau$, $t$ sufficiently large so that $(u'+ tx_\tau) u $ has coefficient $1$ on $\x_\tau^2$. It again becomes clear that  $\deg(\overline{u}\cdot \overline{u}'')$ or $\deg((\partial_\sigma^{\tau( \overline{u},\sigma)} u)$ are nontrivial.
\end{rem}

\subsection{General characteristic, general isomorphism}

In the case of general isomorphisms, say, the isomorphism 
\[\BR(\mu)^k\ \xrightarrow{\ \cdot \ell\ }\ \BR(\mu)^{d-k}\]
we are, instead of considering the first suspension, and ultimately the cone over a simplex, we are instead tasked with considering the $d-2k-1$-fold suspension of a simplex, and the cone over it. The rest of the argument remains the same.

\section{Open problems}
Concerning problems surrounding the $g$-theorem, there is a stronger conjecture available: It has been conjectured that stronger, for $s$-Cohen-Macaulay complexes, we have
this injection for $k\le \frac{d+s}{2}-1$, but the approach offers no clue how to do it. In particular, the combinatorial conjecture is:

\begin{conj}
Consider an $s$-Cohen-Macaulay complex of dimension $d-1$. Then the vector
\[(h_0,\ h_1- h_0,\ \dots,\ h_{\lceil\frac{d+s}{2}\rceil} -h_{\lceil\frac{d+s}{2}\rceil -1})\]
is an $M$-vector.
\end{conj}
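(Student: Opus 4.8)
Assume $s\ge 2$ (for $s=1$ the statement already fails, e.g.\ for the cone over a $5$-cycle, whose $h$-vector is $(1,3,1,0)$). The plan is to split the claim into a routine reformulation and one hard Lefschetz-type input. First I would reduce to an injectivity statement: an $s$-Cohen--Macaulay complex is in particular doubly Cohen--Macaulay, hence level, so with $\Mu=\AR^d(\Sg)$ one has $\BR^\ast(\Mu)=\AR^\ast(\Sg)$; set $N:=\lceil (d+s)/2\rceil$. If for a sufficiently general $\ell\in\AR^1(\Sg)$ the multiplication $\cdot\ell\colon\AR^k(\Sg)\to\AR^{k+1}(\Sg)$ is injective for every $k\le N-1$, then the standard graded algebra $R:=\AR^\ast(\Sg)/\ell\AR^\ast(\Sg)$ has $\dim_\fld R_j=h_j-h_{j-1}$ for all $j\le N$, and Macaulay's theorem \cite{Macaulay} on Hilbert functions of standard graded algebras makes $(h_0,\ h_1-h_0,\ \dots,\ h_N-h_{N-1})$ an $M$-vector. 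So it suffices to prove this ``top-heavy Lefschetz up to degree $N$''. The range $k\le\lfloor d/2\rfloor$ of it is already supplied by Theorem~\ref{mthm:gl} through the cycle filtration of Section~\ref{sec:cycles} (using that $s$-CM implies doubly CM), so the genuinely new content is injectivity of $\cdot\ell$ in the extra degrees $\lfloor d/2\rfloor<k\le N-1$.

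For those extra degrees the natural attack is the vertex-splitting short exact sequence: for a generic Artinian reduction one has
\[0\longrightarrow \AR^\ast(\lk_v\Sg)(-1)\xrightarrow{\ \cdot x_v\ }\AR^\ast(\Sg)\longrightarrow \AR^\ast(\Sg\setminus v)\longrightarrow 0 ,\]
valid because $\Sg\setminus v$ is Cohen--Macaulay of dimension $d-1$ and $\lk_v\Sg$ is Cohen--Macaulay of dimension $d-2$. Choosing $\ell$ generic for all three complexes simultaneously and applying the snake lemma to multiplication by $\ell$, one obtains injectivity of $\cdot\ell$ on $\AR^k(\Sg)$ from injectivity on $\AR^{k-1}(\lk_v\Sg)$ together with injectivity on $\AR^{k}(\Sg\setminus v)$. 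Since $\lk_v\Sg$ is again $s$-Cohen--Macaulay, now of dimension $d-2$, and $k-1\le\lceil((d-1)+s)/2\rceil-1$ whenever $k\le N-1$, an induction on dimension disposes of the link factor in all the required degrees.

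The hard part --- and the reason this problem is out of reach of the present methods --- is the deletion $\Sg\setminus v$, which is only $(s-1)$-Cohen--Macaulay. Iterating the recursion downward in $s$ therefore lands, after $s-1$ steps, on ordinary Cohen--Macaulay complexes, for which $\cdot\ell\colon\AR^k\to\AR^{k+1}$ fails to be injective in any positive degree (already an $h$-vector $(1,3,1,\dots)$ is an obstruction). So one cannot induct on $s$ by deleting a single vertex, and the extra $\approx s/2$ degrees of injectivity must instead be wrung out of the $s$-fold connectivity of $\Sg$ itself; neither the biased-pairing machinery nor the residue calculus of Section~\ref{sec:diff} currently does this. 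The cycle-filtration route of Section~\ref{sec:cycles} also stalls: it only embeds $\AR^\ast(\Sg)$ into a direct sum $\bigoplus_i\BR^\ast(\mu_i)$ of Gorensteinifications of cycles, and Theorem~\ref{mthm:gl} gives Lefschetz for each $\BR^\ast(\mu_i)$ only up to degree $d/2$, never beyond. My expectation is that the correct statement to aim for is a biased-pairing property for the nonface ideals of the iterated antistars $\Sg\setminus W$ with $|W|=s-1$ --- removing enough vertices that the socle of the surviving link becomes visible in degree $N$ --- but establishing biased pairing at those ideals, with no generic Lefschetz input available that high, is exactly the open problem.
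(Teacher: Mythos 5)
You were given a statement that the paper itself labels a \emph{conjecture}; the surrounding text even says explicitly that ``the approach offers no clue how to do it.'' So there is no proof in the paper to compare against, and your conclusion that the problem is genuinely open is the correct reading. The positive parts of your writeup are sound: the reduction via Macaulay's theorem to injectivity of $\cdot\ell\colon\AR^k(\Sg)\to\AR^{k+1}(\Sg)$ for $k\le N-1$ (using levelness of doubly Cohen--Macaulay complexes so that $\BR^\ast(\Mu)=\AR^\ast(\Sg)$ with $\Mu=\AR^d(\Sg)$), and the observation that the cycle filtration of Section~\ref{sec:cycles} together with Theorem~\ref{mthm:gl} already supplies the range $k\le\lfloor d/2\rfloor$ and nothing beyond it, are exactly right.

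Your diagnosis of where the obvious attack stalls is also correct, and it matches the paper's own assessment. The short exact sequence
\[0\longrightarrow\AR^\ast(\lk_v\Sg)(-1)\xrightarrow{\ \cdot x_v\ }\AR^\ast(\Sg)\longrightarrow\AR^\ast(\Sg\setminus v)\longrightarrow 0\]
is valid because $\st_v\Sg$, $\Sg$ and $\Sg\setminus v$ are all Cohen--Macaulay of dimension $d-1$ when $s\ge 2$, so a common linear system is regular on all three; the snake lemma then trades injectivity for $\Sg$ against injectivity for the link (which inducts cleanly, since links of $s$-CM complexes are $s$-CM in one lower dimension) and injectivity for the deletion $\Sg\setminus v$, which is only $(s-1)$-CM. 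Iterating $s-1$ times bottoms out at a plain CM complex, where injectivity of $\cdot\ell$ already fails in degree $1$, as your $h=(1,3,1,0)$ example illustrates. The extra $\approx s/2$ degrees of injectivity must therefore be extracted from the $s$-fold connectivity of $\Sg$ as a whole rather than by peeling off vertices one at a time, and neither the biased-pairing machinery nor the residue calculus of Section~\ref{sec:diff} currently does that. Your closing speculation, that the right target is a biased-pairing statement at the nonface ideals of the iterated antistars $\Sg\setminus W$ with $\lvert W\rvert=s-1$, is a plausible direction, but you are correct that the generic Lefschetz input needed to verify biased pairing at those ideals in degree $N$ is precisely what is missing. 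In short: you identified the statement as open, which is exactly what the paper says, and your account of the obstruction is accurate.
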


It is equally an open problem to extend the anisotropy to general characteristic. We conjecture this is true, but have no good idea for an approach.

\begin{conj}
Consider $\fld$ any field, $\Sg$ any $(d-1)$-dimensional pseudomanifold over $\fld$, and the associated graded commutative face ring~$\fld[\Sg]$. Then, for some field extension $\fld'$ of $\fld$, we have an Artinian reduction $\AR(\Sg)$ that is totally anisotropic, i.e. for every nonzero element $u\in \AR^k(\Sg)$, $k\le \frac{d}{2}$, we have 
\[u^2\ \neq\ 0.\]
\end{conj}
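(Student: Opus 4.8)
The only route currently visible is to attempt the characteristic-$2$ argument of Section~6.1 in arbitrary characteristic, isolate the obstruction, and try to remove it. As always, one first reduces via the support map, Barnette's Proposition~\ref{prp:Bar} and the suspension trick (Lemma~\ref{lem:midred}) to the case of a pseudomanifold $\mu$ of odd dimension $2k-1$ over a transcendental extension, where the claim becomes: $\deg(u^2)\neq 0$ for every nonzero $u\in\AR^k(\mu)$. Writing $u=\sum_\tau\lambda_\tau\x_\tau$ in squarefree monomials, one chooses by Poincar\'e duality a cardinality-$k$ face $\sigma$ with $\deg(\x_\sigma u)\neq 0$, and normalizes as in Section~6.2 so that $u$ is compatible with $\sigma$, with surviving monomial $\tau=\tau(u,\sigma)$ of coefficient $1$.

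In characteristic $2$ one is done at once, because $\deg(u^2)=\sum_\tau\lambda_\tau^2\deg(\x_\tau^2)$ has \emph{no} cross terms, so applying the compatible differential and using $(\sum a_i)^2=\sum a_i^2$ gives $\partial_\sigma^\tau\deg(u^2)=\deg(\x_\sigma u)^2\neq 0$. In odd characteristic the cross terms $2\sum_{\tau<\tau'}\lambda_\tau\lambda_{\tau'}\deg(\x_\tau\x_{\tau'})$ no longer vanish, and what the differential formula of Section~5 (via Corollary~\ref{cor:form}) controls is $\deg(u\cdot u')$ for a \emph{second, transcendentally independent copy} $\V'$ of the vertex coordinates, not $\deg(u^2)$ itself. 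Thus Corollary~\ref{cor:form} gives only that $\deg(u\cdot u')$ or the pairing $\deg\big((\partial_\sigma^\tau u)\cdot u\big)$ is nonzero --- that is, biased pairing, which is exactly what was already exploited for Main Theorem~\ref{mthm:gl} --- and the passage from $\deg(u\cdot u')$ to $\deg(u^2)$ is precisely the substitution $\V'\to\V$, under which a nonzero rational function can perfectly well vanish on the diagonal.

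Two lines of attack seem worth pursuing. First, a descent along a flag of faces: since $\partial_\sigma^\tau u$ lies in a strictly smaller monomial ideal than $u$ (the Key Observation of Section~5), one could try to prove, by downward induction on that ideal together with induction on $k$ and Poincar\'e duality in vertex links, that the simultaneous vanishing of $\deg(u^2)$, $\deg((\partial_\sigma^\tau u)\,u)$, $\deg((\partial_\sigma^\tau u)^2)$, \ldots for all admissible $\sigma$ forces $u=0$. Second, and perhaps more promising, one could look for a characteristic-free substitute for the ``sum of squares'' phenomenon: a higher-order symmetrized differential operator --- assembled from the $\partial_\sigma^\tau$ over the codimension-one faces of a fixed face, in the spirit of the Step~3 normalization --- whose leading term applied to $\deg(u^2)$ is a perfect square of a linear form in the $\lambda_\tau$, so that non-vanishing of some $\deg(\x_\sigma u)$ still propagates. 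A third, softer option is to argue that $\deg(u^2)(\V)$ is not identically zero by a single clever specialization (for instance degenerating $u$ toward a single monomial, where $\deg(\x_\tau^2)$ is a nonzero function, and then deforming), but here the set of $u$ one must control is infinite and the $\lambda_\tau$ may themselves be forced to be functions of $\V$, so a naive Zariski-density argument does not suffice.

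The genuine obstacle is the one the authors flag: ruling out, uniformly in $u$ and $\sigma$, the cancellation that would make $\partial_\sigma^\tau\deg(u^2)=0$ even though the analogous quantity built from an independent copy $\V'$ survives. There is no algebraic identity forcing the relevant cross term to vanish the way characteristic $2$ does, and $\deg\big((\partial_\sigma^\tau u)\cdot u\big)$ is a genuine Poincar\'e pairing that can be tuned against the square term; short of a new idea along one of the lines above, I see no way to upgrade ``every element pairs with something in its monomial ideal'' to ``every element pairs with itself'', which is exactly why the statement is left open.
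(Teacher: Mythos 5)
You have correctly recognized that this statement is not a theorem of the paper but one of its open conjectures: it sits in the ``Open problems'' section, and the authors explicitly write that extending anisotropy to general characteristic ``is equally an open problem\ldots We conjecture this is true, but have no good idea for an approach.'' There is accordingly no proof in the paper to compare against, and your write-up rightly declines to manufacture one.

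Your diagnosis of the obstruction is accurate and matches what the paper itself hints at. The characteristic-$2$ argument of Section~6.1 hinges on the Frobenius identity $(\sum a_i)^2=\sum a_i^2$, which annihilates the off-diagonal terms of the Hessian and makes $\partial_\sigma^\tau\deg(u^2)$ equal the perfect square $\deg(\x_\sigma u)^2$ regardless of normalization. Outside characteristic $2$ the cross terms $2\sum_{\tau<\tau'}\lambda_\tau\lambda_{\tau'}\deg(\x_\tau\x_{\tau'})$ survive, and the two-copies formula of Corollary~\ref{cor:form} only controls $\deg(u\cdot u')$ for an independent transcendental copy $u'$; on the diagonal substitution $\V'\to\V$ this can vanish. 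That is precisely the gap between the biased pairing property, which the paper does establish in all characteristics and which yields Main Theorem~\ref{mthm:gl}, and genuine anisotropy, which the authors only obtain in characteristic $2$ (Main Theorem~\ref{mthm:as}). Your three suggested lines of attack (descent along monomial ideals, a characteristic-free symmetrized differential, a specialization argument) are reasonable speculation, but none is currently a proof, and your write-up does not claim otherwise. As an honest account of where the difficulty lies and why the paper leaves the statement open, this is correct and consistent with the source.
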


Of course, special cases over characteristic $0$, such as the anisotropy of integral homology spheres over $\fld=\mathbb{Q}$ can be established using standard mixed characteristic tricks and reduction to characteristic $2$, but more seems difficult. A generalization of the anisotropy to characteristic $p$ that should be more immediate is the following:

\begin{conj}
Consider $\fld$ any field of characteristic $p$, $\Sg$ any $(d-1)$-dimensional pseudomanifold over $\fld$, and the associated graded commutative face ring~$\fld[\Sg]$. Then, for some field extension $\fld'$ of $\fld$, we have an Artinian reduction $\AR(\Sg)$ that is totally $p$-anisotropic, i.e. for every 
nonzero element $u\in \AR^k(\Sg)$, $k\le \frac{d}{p}$, we have 
\[u^p\ \neq\ 0.\]
\end{conj}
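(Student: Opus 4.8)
The plan is to run the characteristic-$2$ argument above with the Frobenius in place of the vanishing of the off-diagonal Hessian. As there, one first introduces a $p$-biased notion: for an ideal $\mathcal{I}$ in $\BR^\ast(\mu)$ with $pk=d$, call $\mathcal{I}$ \emph{$p$-anisotropic in degree $k$} if $u^p\neq 0$ for every nonzero $u\in\mathcal{I}^k$. Since the pullback along a geometric subdivision is compatible with the degree map, the proof of Lemma~\ref{lem:PLinv} adapts verbatim (squares replaced by $p$-th powers), so $p$-anisotropy is invariant under subdivisions fixing $\Ga$, and in the one-directional form of Proposition~\ref{prp:PLinv} when $\Ga$ is subdivided. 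Combining this with Barnette's Proposition~\ref{prp:Bar}, the suspension and cone reductions of Lemma~\ref{lem:midred} and its $(d-2k-1)$-fold analogue, and the propagation arguments of biased pairing theory used in the Lefschetz proof, one reduces to the critical case: $\mu$ is a pseudomanifold of dimension $pk-1$ placed in suitably generic position over $\widetilde{\fld}$, and one must prove $\deg(u^p)(\V)\not\equiv 0$ for every nonzero $u\in\BR^k(\mu)$.

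\textbf{The mechanism.} Write $u=\sum_\tau\lambda_\tau\x_\tau$ with $\lambda_\tau\in\widetilde{\fld}$; in characteristic $p$ the Frobenius gives the clean identity $u^p=\sum_\tau\lambda_\tau^p\,\x_\tau^p$ with no cross terms, so $\deg(u^p)=\sum_\tau\lambda_\tau^p\,\deg(\x_\tau^p)(\V)$. One needs the generalisation of Lee's formula (Lemma~\ref{lee:formula}) to $p$-th powers, provable by Lee's method or by the global residue formula:
\[
\deg(\x_\tau^p)(\V)=\sum_{F\text{ facet}\supseteq\tau}\deg(\x_F)\Bigl(\prod_{i\in\tau}[F-i]^{\,p-1}\Bigr)\Bigl(\prod_{i\in F\setminus\tau}[F-i]^{-1}\Bigr).
\]
Fix a cardinality-$k$ face $\sigma$ to serve as a pairing partner; by the normalisation step --- the observation that $\KK^k(\st_\sigma\mu,\st_\sigma\Ga)$, equivalently $\BR^k(\st_\sigma\mu)$, is one-dimensional --- we may assume that the only face of $|u|$ meeting $\st_\sigma\mu$ is a single $\tau_0\in\lk_\sigma\mu$, with coefficient $1$. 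Now build a Hasse-differential operator $D_\sigma^{\tau_0}$: the product over the $k$ matched vertex-pairs of a divided-power derivative of order $p-1$ of $\V_{\sigma_j}$ in the directions of $\V_{\tau_{0,j}}$ and $\V_{\sigma_j}$, normalised by $\deg(\x_\sigma\x_{\tau_0})$. By the generalised Leibniz rule, exactly as in the characteristic-$2$ computation, one should have $D_\sigma^{\tau_0}\deg(\x_\tau^p)\equiv 0$ whenever $\tau\notin\st_\sigma\mu$, while $D_\sigma^{\tau_0}\deg(\x_{\tau_0}^p)=\pm\bigl(\deg(\x_{\tau_0}^{\,p-1}\x_\sigma)\bigr)^{p}$. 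Since a Hasse derivative of order $<p$ annihilates every $p$-th power, $D_\sigma^{\tau_0}$ passes through the coefficients $\lambda_\tau^p$, and therefore
\[
D_\sigma^{\tau_0}\deg(u^p)=\pm\,\lambda_{\tau_0}^p\,\bigl(\deg(\x_{\tau_0}^{\,p-1}\x_\sigma)\bigr)^{p}=\pm\,\bigl(\deg(\x_\sigma\, u^{p-1})\bigr)^{p}.
\]
Hence $\deg(u^p)$ is not identically zero as soon as $\deg(\x_{\tau_0}^{\,p-1}\x_\sigma)\neq 0$ for some admissible $\sigma$ (one with $\tau_0\cup\sigma$ a face), and such a $\sigma$ lies among the faces with which Poincar\'e duality forces $u$ to pair. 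If the chosen specialisation of coordinates singularises $u$, one desingularises with an auxiliary variable $\delta$ as before.

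\textbf{The main obstacle.} When $p=2$ we have $p-1=1$, the normalisation makes $\tau_0\sqcup\sigma$ a \emph{facet} $F_0$, and $\deg(\x_{\tau_0}\x_\sigma)=\deg(\x_{F_0})=\pm|\V_{|F_0}|^{-1}$ is a single, manifestly nonzero term. For $p\ge 3$ the monomial $\x_{\tau_0}^{\,p-1}\x_\sigma$ has degree $pk=d$ yet is supported on only $2k<d$ vertices, so $\deg(\x_{\tau_0}^{\,p-1}\x_\sigma)$ is a genuine sum over all facets containing the codimension-$(d-2k)$ face $\tau_0\sqcup\sigma$; proving that this sum does not vanish identically in $\V$ --- equivalently, that $\x_{\tau_0}^{\,p-1}\x_\sigma$ survives in $\BR^d(\mu)$, i.e.\ that sufficiently high powers of faces remain nonzero --- does \emph{not} follow from Poincar\'e duality for $u$, as it did when $p=2$. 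I expect this to be the crux. Three conceivable routes: (i) exhibit an explicit degeneration of $\V$ at which the combinatorial sum is visibly nonzero, which is delicate to control in characteristic $p$ and must avoid the poles coming from proper subfaces; (ii) bootstrap from the Hard Lefschetz property already established in Corollary~\ref{mthm:glp} together with an induction over vertex links, choosing $\sigma$ jointly with the link data in the spirit of the Hall-matching argument behind Lemma~\ref{lem:perturbation}; or (iii) prove the stronger assertion $u^{p-1}\neq 0$ by induction on the exponent --- but the clean Frobenius identity $u^{p^m}=\sum_\tau\lambda_\tau^{p^m}\x_\tau^{p^m}$ is available only at $p$-power exponents, so the intermediate powers genuinely require a new idea. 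It is exactly this gap that separates the transparent characteristic-$2$ argument from the general case.
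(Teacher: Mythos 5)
This statement is labeled as a \emph{conjecture} in the paper's ``Open problems'' section; the paper offers no proof of it, and explicitly remarks only that this generalization ``should be more immediate'' than full anisotropy, without supplying an argument. So there is no paper proof to compare your proposal against.

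Your proposal is correctly self-diagnosed as incomplete. The mechanism you set up is sound and is indeed the natural extension of the characteristic-$2$ argument: the Frobenius identity $u^p=\sum_\tau\lambda_\tau^p\x_\tau^p$, the putative generalization of Lee's formula to $p$-th powers, and the use of a Hasse (divided-power) differential $D_\sigma^{\tau_0}$ of order $p-1$, which passes through $p$-th-power coefficients. This mirrors exactly the role played in the paper by the ordinary partial $\partial_\sigma^\tau$ in characteristic $2$, where the vanishing of off-diagonal Hessian terms is what makes $\partial_\sigma^\tau\deg(u^2)=\deg(\x_\sigma u)^2$ go through. Your reduction to the pseudomanifold of dimension $pk-1$ via suspensions, cones, and Barnette's theorem is also consistent with the paper's machinery.

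The obstacle you identify is the genuine one, and it is exactly the reason the paper leaves the statement as a conjecture. For $p=2$ the output of the differential is $\deg(\x_{\tau_0}\x_\sigma)^2$ where $\tau_0\sqcup\sigma$ is a facet, so the degree is a single nonzero reciprocal determinant. For $p\ge 3$ you land on $\deg(\x_{\tau_0}^{p-1}\x_\sigma)$, whose support has only $2k<d$ vertices, so it is a sum over many facets, and nonvanishing of that sum is equivalent to a nontrivial assertion about high powers of faces in $\BR^d(\mu)$ that is not delivered by Poincar\'e duality alone. Your three candidate escape routes (degeneration, bootstrapping from hard Lefschetz and the Hall-matching perturbation argument, or an induction through intermediate powers) are all plausible directions, but none is carried out, and the third meets precisely the difficulty you name: the clean Frobenius identity is available only at $p$-power exponents. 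In short, your analysis correctly locates where the characteristic-$2$ proof stops generalizing; a proof of the conjecture would have to supply the missing nonvanishing, and neither you nor the paper has it.
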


Finally, both \cite{AHL} and the present work only prove the existence of Lefschetz elements on a \emph{generic} linear system of parameters, and it would be interesting to have specific ones. One candidate is, in our opinion, the moment curve $(t,t^2, \dots, t^d)$.

\begin{prb}
Do distinct points on the moment curve provide a linear system with the Lefschetz property (on any (PL)-sphere, pseudomanifold or cycle)?
\end{prb}

\textbf{Acknowledgements.} We thank Christos Athanasiadis, David Eisenbud, Gil Kalai, Eran Nevo, Richard Stanley, Geva Yashfe and Hailun Zheng for some enlightening conversations, as well as helpful comments on the first version. K. A.  was supported by the European Research Council under the European
Union's Seventh Framework Programme ERC Grant agreement ERC StG 716424 - CASe and the Israel Science Foundation under ISF Grant 1050/16. 
We benefited from
experiments with the computer algebra program Macaulay2 \cite{M2}. 
This work is part of the Univ. of Ioannina Ph.D. thesis of V. P., financially 
supported by the Special Account for Research
Funding (E.L.K.E.) of  the University of Ioannina under the program with code number
82561 and title \emph{Program of financial support for Ph.D. students and postdoctoral researchers}.

	{\small
		\bibliographystyle{myamsalpha}
		\bibliography{ref}}

\end{document}